 \newtheorem{theorem}{Theorem}
 \newtheorem{corollary}[theorem]{Corollary}
 \newtheorem{lemma}[theorem]{Lemma}
\theoremstyle{definition}
\theoremstyle{remark}
\begin{document}
%%%%%%
%%%%%% Make Title
%%%%%%
\title[Hermite expansions]{Hermite expansions of some tempered distributions}
\author[H.~Chihara et al]{Hiroyuki Chihara, Takashi Furuya and Takumi Koshikawa}
\address[HC]{Department of Mathematics, Faculty of Education, University of the Ryukyus, Nishihara, Okinawa 903-0213, Japan}
\email{hiroyuki.chihara@gmail.com}
\address[TF]{Graduate School of Mathematics, Nagoya University, Nagoya, Aichi 464-8602, Japan }
\email{takashi.furuya0101@gmail.com}
\address[TK]{Department of Science, Graduate School of Science and Technology for Innovation, Yamaguchi University, Yamaguchi, Yamaguchi 753-8512, Japan}
\email{tk.knku14tn050508@gmail.com}
%%%
%%%
%%%
\thanks{The first author is supported by the JSPS Grant-in-Aid for Scientific Research \#16K05221.}
\subjclass[2000]{Primary 46F12; Secondary 33C45, 47B32}
\keywords{Bargmann transform, Hermite expansion}
\begin{abstract}
We compute Hermite expansions of some tempered distributions by using the Bargmann transform. 
In other words, we calculate the Taylor expansions of the corresponding entire functions.  
Our method of computations seems to be superior to the direct computations in the shifts of singularities and the higher dimensional cases. 
\end{abstract}
\maketitle
%%%%%%
%%%%%% section 1
%%%%%%
\section{Introduction}
\label{section:introduction}
This paper is concerned with computations of Hermite expansions of some tempered distributions. 
Let $n$ be a positive integer describing the space dimension.
We denote the set of all square integrable functions on $\mathbb{R}^n$ 
by $L^2(\mathbb{R}^n)$, which is a Hilbert space equipped with an inner product
$$
(f,g)_{L^2(\mathbb{R}^n)}
=
\int_{\mathbb{R}^n}
f(x)\overline{g(x)}
dx
$$
for $f,g \in L^2(\mathbb{R}^n)$. 
The system of Hermite functions is defined by  
$$
h_\alpha(x)
=
\frac{(-1)^{\lvert\alpha\rvert}}{\pi^{n/4}2^{\lvert{\alpha}\rvert/2}\alpha!^{1/2}}
e^{x^2/2}
\left(\frac{\partial}{\partial x}\right)^\alpha
e^{-x^2},
\quad
\alpha\in(\mathbb{N}\cup\{0\})^n, 
$$
where
$\mathbb{N}$ is the set of all positive integers, 
and 
for $\alpha=(\alpha_1,\dotsc,\alpha_n)\in(\mathbb{N}\cup\{0\})^n$, 
$x=(x_1,\dotsc,x_n)\in\mathbb{R}^n$ and $z=(z_1,\dotsc,z_n)\in\mathbb{C}^n$, 
set
$\lvert\alpha\rvert=\alpha_1+\dotsb+\alpha_n$, 
$\alpha!=\alpha_1!\dotsb\alpha_n!$, 
$zx=z_1x_1+\dotsb+z_nx_n$,
$x^2=xx$,
$\lvert{z}\rvert^2=z\bar{z}$ 
and
$$
\left(\frac{\partial}{\partial x}\right)^\alpha
=
\frac{\partial^{\alpha_1}}{\partial x_1^{\alpha_1}}
\dotsb
\frac{\partial^{\alpha_n}}{\partial x_n^{\alpha_n}}.
$$
It is well-known that the system of Hermite functions is 
a complete orthonormal system of the Hilbert space $L^2(\mathbb{R}^n)$, 
that is, for any $f \in L^2(\mathbb{R}^n)$, 
$$
f=\sum_{\alpha}(f,h_\alpha)_{L^2(\mathbb{R}^n)}h_\alpha
\quad
\text{in}
\quad
L^2(\mathbb{R}^n)
$$
holds, where the summation is taken over all multi-indices.  
\par
Let $\mathscr{S}(\mathbb{R}^n)$ be the Schwartz class on $\mathbb{R}^n$, 
and let $\mathscr{S}^\prime(\mathbb{R}^n)$ be its topological dual, 
that is, the set of all tempered distributions on $\mathbb{R}^n$. 
In the celebrated paper \cite{simon}, 
Simon studied the Hermite expansions of tempered distributions
$$
T=\sum_{\alpha}\langle{T,h_\alpha}\rangle h_\alpha
\quad
\text{in}
\quad
\mathscr{S}^\prime(\mathbb{R}^n)
$$
for any $T\in\mathscr{S}^\prime(\mathbb{R}^n)$, 
where $\langle\cdot,\cdot\rangle$ denotes the pairing of
$\mathscr{S}^\prime(\mathbb{R}^n)$ and $\mathscr{S}(\mathbb{R}^n)$.
Note that all the Hermite functions $h_\alpha$ are real-valued.
In \cite{simon} Simon characterize
the decay order of the Fourier coefficients of rapidly decreasing functions,
and the growth order of the Fourier coefficients of tempered distributions.
More precisely he proved the following.
\begin{theorem}[Simon \cite{simon}]
\label{theorem:simon}
\quad
\begin{itemize}
\item
If $\phi \in \mathscr{S}(\mathbb{R}^n)$, then 
$\displaystyle\sum_{\alpha}\lvert(\phi,h_\alpha)_{L^2(\mathbb{R}^n)}\rvert^2(1+\lvert\alpha\rvert)^m<\infty$    
for all $m =0,1,2,\dotsc$.
\item
Let 
$\{a_\alpha\}_{\alpha\in(\mathbb{N}\cup\{0\})^n}$ 
be a sequence of complex numbers. 
If 
$\displaystyle\sum_{\alpha} \lvert{a_\alpha}\rvert^2(1+\lvert\alpha\rvert)^m<\infty$ 
for all $m =0,1,2,\dotsc$, 
then a series 
$\displaystyle\sum_\alpha a_\alpha h_\alpha$ 
converges in $\mathscr{S}(\mathbb{R}^n)$. 
\item 
If $T \in \mathscr{S}^\prime(\mathbb{R}^n)$, 
then there exist $C>0$ and $m\in\mathbb{N}\cup\{0\}$ such that 
$\lvert\langle{T,h_\alpha}\rangle\rvert\leqslant C(1+\lvert\alpha\rvert)^m$ 
for all $\alpha \in (\mathbb{N}\cup\{0\})^n$. 
\item 
Let 
$\{a_\alpha\}_{\alpha\in(\mathbb{N}\cup\{0\})^n}$ 
be a sequence of complex numbers. 
If there exist $C>0$ and $m\in\mathbb{N}\cup\{0\}$ such that 
$\lvert{a_\alpha}\rvert\leqslant C(1+\lvert\alpha\rvert)^m$ 
for all $\alpha \in (\mathbb{N}\cup\{0\})^n$,       
then 
$\mathscr{S}(\mathbb{R}^n)\ni\phi \mapsto \displaystyle\sum_{\alpha}a_n(\phi,h_\alpha)_{L^2(\mathbb{R}^n)}$ 
defines a tempered distribution.       
\end{itemize}
\end{theorem}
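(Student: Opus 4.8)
The plan is to derive all four assertions from three facts: (i)~Parseval's identity for the orthonormal basis $\{h_\alpha\}$ of $L^2(\mathbb{R}^n)$; (ii)~the Hermite operator $\mathcal{H}=-\Delta+\lvert x\rvert^2$ maps $\mathscr{S}(\mathbb{R}^n)$ continuously into itself, is symmetric on it, and satisfies $\mathcal{H}h_\alpha=(2\lvert\alpha\rvert+n)h_\alpha$; and (iii)~the topology of $\mathscr{S}(\mathbb{R}^n)$ is generated by the Hermite--Sobolev seminorms $\phi\mapsto\lVert\mathcal{H}^k\phi\rVert_{L^2(\mathbb{R}^n)}$ ($k=0,1,2,\dotsc$), i.e.\ $\mathscr{S}(\mathbb{R}^n)=\bigcap_{k\geqslant0}\operatorname{Dom}(\mathcal{H}^k)$ as topological vector spaces. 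The only other input is elementary: the number of multi-indices $\alpha$ with $\lvert\alpha\rvert=j$ equals $\binom{j+n-1}{n-1}$, a polynomial in $j$ of degree $n-1$, so $\sum_\alpha(1+\lvert\alpha\rvert)^{-2k}<\infty$ for every integer $k>n/2$, and $1+\lvert\alpha\rvert$ is comparable to $2\lvert\alpha\rvert+n$.

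The core computation is that, for $\phi\in\mathscr{S}(\mathbb{R}^n)$ and every integer $k\geqslant0$,
$$
\sum_\alpha(2\lvert\alpha\rvert+n)^{2k}\,\lvert(\phi,h_\alpha)_{L^2(\mathbb{R}^n)}\rvert^2
=\lVert\mathcal{H}^k\phi\rVert_{L^2(\mathbb{R}^n)}^2 ,
$$
which follows from $(\mathcal{H}^k\phi,h_\alpha)_{L^2(\mathbb{R}^n)}=(\phi,\mathcal{H}^kh_\alpha)_{L^2(\mathbb{R}^n)}=(2\lvert\alpha\rvert+n)^k(\phi,h_\alpha)_{L^2(\mathbb{R}^n)}$ by (ii), together with Parseval applied to $\mathcal{H}^k\phi\in\mathscr{S}(\mathbb{R}^n)\subset L^2(\mathbb{R}^n)$. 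Since $\mathcal{H}^k\phi$ is square integrable, the left side is finite, which (by comparability) is the first assertion. For the second assertion, if $\sum_\alpha\lvert a_\alpha\rvert^2(1+\lvert\alpha\rvert)^m<\infty$ for all $m$, then for finite sets $F\subset G$ of multi-indices one has $\lVert\mathcal{H}^k\sum_{\alpha\in G\setminus F}a_\alpha h_\alpha\rVert_{L^2(\mathbb{R}^n)}^2=\sum_{\alpha\in G\setminus F}(2\lvert\alpha\rvert+n)^{2k}\lvert a_\alpha\rvert^2$, a tail of a convergent series; hence the partial sums of $\sum_\alpha a_\alpha h_\alpha$ are Cauchy in each Hermite--Sobolev seminorm, so by (iii) the series converges in $\mathscr{S}(\mathbb{R}^n)$.

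For the third assertion, $T\in\mathscr{S}^\prime(\mathbb{R}^n)$ and (iii) give a constant $C>0$ and an integer $k\geqslant0$ with $\lvert\langle T,\psi\rangle\rvert\leqslant C\sum_{j=0}^{k}\lVert\mathcal{H}^j\psi\rVert_{L^2(\mathbb{R}^n)}$ for all $\psi\in\mathscr{S}(\mathbb{R}^n)$; taking $\psi=h_\alpha$ gives $\lvert\langle T,h_\alpha\rangle\rvert\leqslant C(k+1)(2\lvert\alpha\rvert+n)^k\leqslant C^\prime(1+\lvert\alpha\rvert)^k$. (Alternatively, avoiding (iii): since $\phi=\sum_\alpha(\phi,h_\alpha)_{L^2(\mathbb{R}^n)}h_\alpha$ converges in $\mathscr{S}(\mathbb{R}^n)$ by the first two assertions, if $\langle T,h_\alpha\rangle$ grew faster than every polynomial one could build a Schwartz function $\phi$ for which $\langle T,\phi\rangle=\sum_\alpha(\phi,h_\alpha)_{L^2(\mathbb{R}^n)}\langle T,h_\alpha\rangle$ diverges.) For the fourth assertion, assume $\lvert a_\alpha\rvert\leqslant C(1+\lvert\alpha\rvert)^m$, fix an integer $k>n/2$, and set $A=\bigl(\sum_\alpha(1+\lvert\alpha\rvert)^{-2k}\bigr)^{1/2}<\infty$. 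Two applications of Cauchy--Schwarz give
$$
\sum_\alpha\lvert a_\alpha\rvert\,\lvert(\phi,h_\alpha)_{L^2(\mathbb{R}^n)}\rvert
\leqslant C\sum_\alpha(1+\lvert\alpha\rvert)^{m+k}\lvert(\phi,h_\alpha)_{L^2(\mathbb{R}^n)}\rvert\,(1+\lvert\alpha\rvert)^{-k}
\leqslant CA\Bigl(\sum_\alpha(1+\lvert\alpha\rvert)^{2m+2k}\lvert(\phi,h_\alpha)_{L^2(\mathbb{R}^n)}\rvert^2\Bigr)^{1/2},
$$
whose right side is finite by the first assertion; hence $\langle T,\phi\rangle:=\sum_\alpha a_\alpha(\phi,h_\alpha)_{L^2(\mathbb{R}^n)}$ converges absolutely and is linear in $\phi$. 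Picking an integer $k^\prime$ with $2k^\prime\geqslant2m+2k$, the core identity and comparability bound the last factor by a constant times $\lVert\mathcal{H}^{k^\prime}\phi\rVert_{L^2(\mathbb{R}^n)}$; thus $\lvert\langle T,\phi\rangle\rvert\leqslant C^{\prime\prime}\lVert\mathcal{H}^{k^\prime}\phi\rVert_{L^2(\mathbb{R}^n)}$, and since $\phi\mapsto\lVert\mathcal{H}^{k^\prime}\phi\rVert_{L^2(\mathbb{R}^n)}$ is a continuous seminorm on $\mathscr{S}(\mathbb{R}^n)$ by (iii), $T\in\mathscr{S}^\prime(\mathbb{R}^n)$.

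The real obstacle is ingredient (iii). Its easy half --- that $\lVert\mathcal{H}^k\phi\rVert_{L^2(\mathbb{R}^n)}$ is dominated by a Schwartz seminorm --- is immediate, since $\mathcal{H}^k=(-\Delta+\lvert x\rvert^2)^k$ is a differential operator of order $2k$ with polynomial coefficients of degree at most $2k$, and any $L^2(\mathbb{R}^n)$ norm is bounded by a sup norm after extracting the integrable weight $(1+\lvert x\rvert)^{-n-1}$; this half alone already suffices for the first assertion. The hard half --- recovering every Schwartz seminorm from the family $\{\lVert\mathcal{H}^k\phi\rVert_{L^2(\mathbb{R}^n)}\}_k$ --- amounts to the a priori estimate $\sum_{\lvert\beta\rvert+\lvert\gamma\rvert\leqslant2k}\lVert x^\beta\partial^\gamma\phi\rVert_{L^2(\mathbb{R}^n)}^2\leqslant C_k\bigl(\lVert\mathcal{H}^k\phi\rVert_{L^2(\mathbb{R}^n)}^2+\lVert\phi\rVert_{L^2(\mathbb{R}^n)}^2\bigr)$ followed by the $L^2$ Sobolev embedding; the estimate is obtained by iterating the integration-by-parts identity $\sum_{j=1}^{n}\bigl(\lVert x_j\phi\rVert_{L^2(\mathbb{R}^n)}^2+\lVert\partial_j\phi\rVert_{L^2(\mathbb{R}^n)}^2\bigr)=(\mathcal{H}\phi,\phi)_{L^2(\mathbb{R}^n)}\leqslant\lVert\mathcal{H}\phi\rVert_{L^2(\mathbb{R}^n)}\lVert\phi\rVert_{L^2(\mathbb{R}^n)}$ and controlling the lower-order commutator terms that appear. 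Everything beyond this is routine.
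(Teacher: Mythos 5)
The paper does not prove this theorem: it is quoted from Simon's article \cite{simon} and used as background, so there is no in-paper proof to compare against. Judged on its own, your argument is correct and is in fact essentially Simon's original one: everything is reduced to the spectral decomposition of the harmonic oscillator $\mathcal{H}=-\Delta+\lvert x\rvert^2$ together with the $N$-representation of $\mathscr{S}(\mathbb{R}^n)$, i.e.\ the fact that the seminorms $\phi\mapsto\lVert\mathcal{H}^k\phi\rVert_{L^2}$ generate the Schwartz topology. The core identity $\sum_\alpha(2\lvert\alpha\rvert+n)^{2k}\lvert(\phi,h_\alpha)\rvert^2=\lVert\mathcal{H}^k\phi\rVert_{L^2}^2$, the counting bound $\#\{\alpha:\lvert\alpha\rvert=j\}=\binom{j+n-1}{n-1}$ making $\sum_\alpha(1+\lvert\alpha\rvert)^{-2k}$ finite for $k>n/2$, and the deduction of all four bullets from these are all sound; you also correctly note that the first and fourth assertions need only the easy half of the seminorm equivalence. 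Two small points: the estimate in the fourth assertion is a single application of Cauchy--Schwarz, not two; and ingredient (iii), which you rightly flag as the only real content, is sketched rather than proved --- the iteration of the identity $\sum_j\bigl(\lVert x_j\phi\rVert_{L^2}^2+\lVert\partial_j\phi\rVert_{L^2}^2\bigr)=(\mathcal{H}\phi,\phi)_{L^2}$ with control of commutators is the standard route and does work, but a complete write-up would have to carry out that induction (or simply cite Simon, as the authors do).
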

\par
There have been few concrete examples of Hermite expansions of tempered distributions 
until recently. 
Quite recently, in \cite{kagawa} Kagawa computed Hermite expansions of
some tempered distributions on $\mathbb{R}$:
the Dirac measure at the origin, the Heaviside function,
the signature function, the principal value of $1/x$ and etc,
which have a single point singularity at the origin.
His method of computation is based on the direct computation of the Fourier coefficients of Hermite expansions and the Fourier transform of the Hermite functions. 
\par
The purpose of the present paper is to propose an alternative method of
calculating Hermite expansions of tempered distributions on the Euclidean space. 
Our method of proof is based on the Bargmann transform of tempered distributions.
The Bargmann transform of $u \in \mathscr{S}(\mathbb{R}^n)$ is defined by
$$
Bu(z)
=
2^{-n/2}
\pi^{-3n/4}
\int_{\mathbb{R}^n}
e^{-(z^2/4-zx+x^2/2)}
u(x)
dx,
\quad
z\in\mathbb{C}^n. 
$$
The Bargmann transform can be defined for tempered distributions 
since its integral kernel is a Schwartz function for any fixed $z\in\mathbb{C}^n$.
A Bargmann transformation of a Hermite expansion becomes 
a Taylor expansion of an entire function on $\mathbb{C}^n$ 
because the integral kernel of the Bargmann transform is 
the generating function of the Hermite functions. 
We believe that if we use the Bargmann transform, it becomes relatively easy to deal with a single point singularity which is not necessarily located at the origin, and higher dimensional cases. 
\par
Here we recall elementary properties of Bargmann transform needed later. 
Let $L^2_B(\mathbb{C}^n)$ be a Hilbert space of measurable functions on $\mathbb{C}^n$ 
equipped with an inner product
$$
(U,V)_{L^2_B(\mathbb{C}^n)}
= 
\int_{\mathbb{C}^n}
U(z)\overline{V(z)}
e^{-\lvert{z}\rvert^2/2}
L(dz)
$$
for $U,V \in L^2_B(\mathbb{C}^n)$,
where $L(dz)$ is the Lebesgue measure on $\mathbb{C}^n\simeq\mathbb{R}^{2n}$.
We denote by $H_B(\mathbb{C}^n)$ 
the set of all holomorphic functions in $L^2_B(\mathbb{C}^n)$. 
Then $H_B(\mathbb{C}^n)$ also becomes a Hilbert space since $H_B(\mathbb{C}^n)$ is a closed subspace of $L^2_B(\mathbb{C}^n)$. 
It is well-known that the Bargmann transform $B$ is a Hilbert space isometry of $L^2(\mathbb{R}^n)$ onto $H_B(\mathbb{C}^n)$, and the inverse mapping is given by 
$$
B^\ast U(x)
=
2^{-n/2}\pi^{-3n/4}
\int_{\mathbb{C}^n}
e^{-(\bar{z}^2/4-\bar{z}x-x^2/2)}
U(z)
e^{-\lvert{z}\rvert^2/2}
L(dz),
\quad
U \in H_B(\mathbb{C}^n). 
$$
Note that $B^\ast$ is extended on $L^2_B(\mathbb{C}^n)$ onto $L^2(\mathbb{R}^n)$.  
Set $\varphi_\alpha=Bh_\alpha$ for all $\alpha\in(\mathbb{N}\cup\{0\})^n$,
that is, $h_\alpha=B^\ast\varphi_\alpha$. 
The family of holomorphic functions
$\{\varphi_\alpha\}_{\alpha\in(\mathbb{N}\cup\{0\})^n}$ 
is a complete orthonormal system of $H_B(\mathbb{C}^n)$
since $B$ is a Hilbert space isometry.
Indeed $\varphi_\alpha$ is given by
$$
\varphi_\alpha(z)
=
\frac{z^\alpha}{\sqrt{(2\pi)^n 2^{\lvert\alpha\rvert} \alpha!}},
\quad
\alpha\in(\mathbb{N}\cup\{0\})^n.
$$
This implies that if the Taylor expansion of $BT(z)$ for $T\in\mathscr{S}^\prime(\mathbb{R}^n)$ is given by 
$$
BT(z)=\sum_{\alpha}b_\alpha z^\alpha
\quad\text{in}\quad
B\bigl(\mathscr{S}^\prime(\mathbb{R}^n)\bigr),
\quad
\{b_\alpha\}_{\alpha\in(\mathbb{N}\cup\{0\})^n}\subset\mathbb{C}, 
$$
then the Hermite expansion of $T$ is given by 
$$
T 
= 
\sum_{\alpha} 
\Bigl(
\pi^{n/2}\ 2^{(\lvert\alpha\rvert+n)/2}\ \alpha!^{1/2}
\cdot
b_\alpha
\Bigr) 
h_\alpha
\quad\text{in}\quad
\mathscr{S}^\prime(\mathbb{R}^n). 
$$
For more details about the Bargmann transform and related topics, see, e.g.,
\cite{folland}, 
\cite{sjoestrand}, 
\cite{wong1}, 
\cite{wong2}, 
\cite{chihara1}, 
\cite{chihara2} 
and references therein. 
We shall compute the Taylor expansions of the Bargmann transformations of some tempered distributions and give their Hermite expansions.
\par
The plan of the present paper is as follows.
Section~\ref{section:preliminaries} prepares some elementary facts used later. 
Section~\ref{section:one-dimensional-cases} computes Hermite expansions of some tempered distributions on the real axis: 
$x^p$ ($p\in\mathbb{N}\cup\{0\}$), 
$x_+^\lambda$ ($\operatorname{Re}\lambda>-1$), 
$\lvert{x}\rvert^\lambda$  ($\operatorname{Re}\lambda>-1$), 
$\operatorname{sgn}(x)$, 
the Dirac measure $\delta_c$ ($c\in\mathbb{R}$), 
$\operatorname{vp}1/x$,  
and 
$(x\pm{i}0)^\lambda$ for $\operatorname{Re}(\lambda)>-1$ or $\lambda=-1$, 
where $x_+^\lambda$ is a function of $x\in\mathbb{R}$ defined by 
$x_+=x^\lambda$ for $x>0$ and $x_\pm^\lambda=0$ for $x<0$. 
Finally Section~\ref{section:higher-dimensional-cases} computes Hermite expansions of
tempered distributions on the Euclidean space whose dimension is strictly more than one:
the Dirac measure $\delta_c$ ($c\in\mathbb{R}^n$),
$\lvert{x}\rvert^{\lambda}$ ($\operatorname{Re}(\lambda>-n)$), 
$\operatorname{vp} K(x)/\lvert{x}\rvert^n$ 
with some vanishing condition of $K$ on the unit sphere 
$\mathbb{S}^{n-1}=\{x\in\mathbb{R}^n \ \vert \ \lvert{x}\rvert=1\}$, 
and the standard volume element of spheres centered at the origin. 
\par
The present paper is based on the activity for undergraduate thesis 
of the second and the third authors 
at College of Mathematics, University of Tsukuba. 
This was supervised by the first author. 
The authors are grateful for the research environment 
at Institute of Mathematics, University of Tsukuba. 
The authors would like to thank the referee for reading our manuscript carefully. 
%%%%%%
%%%%%% section 2
%%%%%%
\section{Preliminaries}
\label{section:preliminaries}
In this section we confirm some preliminary facts by elementary computations. 
Here we recall the definition of the Gamma function
$$
\Gamma(\lambda)
=
\int_0^\infty
t^{\lambda-1}
e^{-t}
dt,
\quad
\operatorname{Re}(\lambda)>0.
$$
We first check some moments of the one-dimensional standard normal distribution. 
\begin{lemma}
\label{theorem:moments} 
For $\operatorname{Re}(\lambda)>-1$,
$$
I(\lambda)
=
\int_0^\infty 
x^{\lambda}
e^{-x^2/2}
dx
=
2^{(\lambda-1)/2} 
\Gamma\left(\frac{\lambda+1}{2}\right). 
$$
In particular, for $l=0,1,2,\dotsc$, 
$$
I(2l)
=
\frac{(2l)!}{2^{l+1/2}l!}\sqrt{\pi},
\quad
I(2l+1)
=
2^ll!.
$$
\end{lemma}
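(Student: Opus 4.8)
The plan is to reduce the integral $I(\lambda)=\int_0^\infty x^\lambda e^{-x^2/2}\,dx$ to the defining integral of the Gamma function by a single change of variables, and then to specialize $\lambda$ to even and odd nonnegative integers using elementary properties of $\Gamma$. First I would substitute $t=x^2/2$, so that $x=\sqrt{2t}$ and $dx=dt/\sqrt{2t}=2^{-1/2}t^{-1/2}\,dt$. Under this substitution $x^\lambda=(2t)^{\lambda/2}=2^{\lambda/2}t^{\lambda/2}$, and the integration limits are preserved since $x:0\to\infty$ corresponds to $t:0\to\infty$. Collecting the powers of $2$ and of $t$ gives
$$
I(\lambda)
=
\int_0^\infty 2^{\lambda/2}t^{\lambda/2}\,e^{-t}\,2^{-1/2}t^{-1/2}\,dt
=
2^{(\lambda-1)/2}\int_0^\infty t^{(\lambda-1)/2}e^{-t}\,dt
=
2^{(\lambda-1)/2}\,\Gamma\!\left(\frac{\lambda+1}{2}\right),
$$
where the last equality is the definition of the Gamma function, valid because $\operatorname{Re}\bigl((\lambda+1)/2\bigr)>0$ is equivalent to $\operatorname{Re}(\lambda)>-1$. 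This establishes the first formula.

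For the special values, set first $\lambda=2l$. Then $I(2l)=2^{(2l-1)/2}\Gamma(l+1/2)=2^{l-1/2}\Gamma(l+1/2)$, and I would rewrite $\Gamma(l+1/2)$ using the standard identity $\Gamma(l+1/2)=\dfrac{(2l)!}{4^l\,l!}\sqrt{\pi}$ (provable by induction from $\Gamma(1/2)=\sqrt{\pi}$ and the functional equation $\Gamma(s+1)=s\Gamma(s)$). Substituting gives $I(2l)=2^{l-1/2}\cdot\dfrac{(2l)!}{4^l\,l!}\sqrt{\pi}=\dfrac{(2l)!}{2^{l+1/2}\,l!}\sqrt{\pi}$, as claimed. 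Next set $\lambda=2l+1$. Then $I(2l+1)=2^{(2l+1-1)/2}\Gamma(l+1)=2^l\,\Gamma(l+1)=2^l\,l!$, using $\Gamma(l+1)=l!$. Alternatively, $I(2l+1)$ can be obtained directly from the substitution $t=x^2/2$, which turns it into $2^l\int_0^\infty t^l e^{-t}\,dt=2^l\,l!$, bypassing the half-integer values entirely.

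There is essentially no obstacle here: the only point requiring a small amount of care is the justification that $\int_0^\infty t^{(\lambda-1)/2}e^{-t}\,dt$ converges and equals $\Gamma((\lambda+1)/2)$ for complex $\lambda$ with $\operatorname{Re}(\lambda)>-1$, which follows because $|t^{(\lambda-1)/2}|=t^{(\operatorname{Re}\lambda-1)/2}$ is integrable near $0$ precisely under that hypothesis and the exponential factor controls the tail. The evaluation of $\Gamma(l+1/2)$ for the even case is the only place an auxiliary induction is invoked, and it is entirely standard.
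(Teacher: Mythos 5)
Your proof is correct and follows essentially the same route as the paper: the substitution $x=\sqrt{2t}$ reducing $I(\lambda)$ to the Gamma integral, followed by $\Gamma(1/2)=\sqrt{\pi}$, $\Gamma(1)=1$ and the functional equation to evaluate the special cases. You simply spell out the details that the paper leaves to the reader.
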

\begin{proof}
These computations are basically due to 
the change of variable $x=\sqrt{2t}$, 
and well-known facts  
$\Gamma(1/2)=\sqrt{\pi}$, $\Gamma(1)=1$ and  
$\Gamma(\lambda+1)=\lambda\Gamma(\lambda)$ for $\operatorname{Re}(\lambda)>0$.  
\end{proof}
Let $n$ be an integer not smaller than two. 
In $\mathbb{R}^n$ we need moments of the unit sphere $\mathbb{S}^{n-1}$ 
for some tempered distributions. 
We denote the volume element of hypersurfaces by $d\sigma$. 
Suppose that $K \in C^\infty(\mathbb{S}^{n-1})$. 
Set
$$
M(n,K,\alpha)
=
\int_{\mathbb{S}^{n-1}}
\omega^\alpha
K(\omega)
d\sigma(\omega),
\quad
\alpha\in(\mathbb{N}\cup\{0\})^n.
$$
Note that if $\alpha/2 \not\in (\mathbb{N}\cup\{0\})^n$ then 
$$
M(n,1,\alpha)
=
\int_{\mathbb{S}^{n-1}}
\omega^\alpha
d\sigma(\omega)
=
0.
$$
It is possible to compute $M(n,1,2\alpha)$ for $\alpha\in(\mathbb{N}\cup\{0\})^n$ 
by using the polar coordinates in $\mathbb{R}^n$. 
Unfortunately, however,
it seems to be very difficult to show the results of computations of
$M(n,1,2\alpha)$ by using the multi-index $\alpha$. 
Here we show $M(n,1,2\alpha)$ for $n=2,3$. 
\begin{lemma}
\label{theorem:sphere}
For $k,l,m=0,1,2,\dotsc$, 
\begin{align*}
  M\bigl(2,1,(2k,2l)\bigr)
& =
  2\pi
  \sum_{\mu=0}^l
  \frac{(2k+2\mu)! l!}{(k+\mu)!^2 \mu! (l-\mu)!}
  \cdot
  \frac{(-1)^\mu}{2^{2k+2\mu}},
\\
  M\bigl(3,1,(2k,2l,2m)\bigr)
& =
  4\pi
  \sum_{\nu=0}^{l+m}
  \frac{(l+m)!}{\nu!(l+m-\nu)!}
  \cdot
  \frac{(-1)^\nu}{2k+2\nu+1}
\\ 
& \quad \ \  \times
  \sum_{\mu=0}^m
  \frac{(2l+2\mu)! m!}{(l+\mu)!^2 \mu! (m-\mu)!}
  \cdot
  \frac{(-1)^\mu}{2^{2l+2\mu}}.
\end{align*} 
\end{lemma}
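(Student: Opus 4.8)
The plan is to evaluate both integrals directly by passing to polar and spherical coordinates and then reducing each to one-dimensional integrals that the binomial theorem makes completely elementary.

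First, for $n=2$, I would parametrize $\mathbb{S}^1$ by $\omega=(\cos\theta,\sin\theta)$ with $\theta\in[0,2\pi)$ and $d\sigma(\omega)=d\theta$, so that $M\bigl(2,1,(2k,2l)\bigr)=\int_0^{2\pi}\cos^{2k}\theta\,\sin^{2l}\theta\,d\theta$. Expanding $\sin^{2l}\theta=(1-\cos^2\theta)^l=\sum_{\mu=0}^{l}\binom{l}{\mu}(-1)^\mu\cos^{2\mu}\theta$ reduces the problem to the Wallis-type integral $W_j:=\int_0^{2\pi}\cos^{2j}\theta\,d\theta=2\pi(2j)!/\bigl(2^{2j}(j!)^2\bigr)$, whose value I would obtain either by an elementary induction on $j$ using integration by parts, or by evaluating $\int_{\mathbb{R}^2}x_1^{2j}e^{-\lvert x\rvert^2/2}\,dx$ in polar coordinates and invoking Lemma \ref{theorem:moments}. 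Substituting this back and rewriting $\binom{l}{\mu}=l!/\bigl(\mu!(l-\mu)!\bigr)$ yields the first asserted identity directly.

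Next, for $n=3$, I would use the spherical coordinates $\omega=(\cos\phi,\sin\phi\cos\theta,\sin\phi\sin\theta)$ with $\phi\in[0,\pi]$, $\theta\in[0,2\pi)$ and $d\sigma(\omega)=\sin\phi\,d\phi\,d\theta$. The integrand then separates, giving
$$
M\bigl(3,1,(2k,2l,2m)\bigr)
=
\Bigl(\int_0^{\pi}\cos^{2k}\phi\,\sin^{2l+2m+1}\phi\,d\phi\Bigr)
\Bigl(\int_0^{2\pi}\cos^{2l}\theta\,\sin^{2m}\theta\,d\theta\Bigr),
$$
and the second factor is exactly $M\bigl(2,1,(2l,2m)\bigr)$, already computed in the previous step. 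For the first factor I would write $\sin^{2l+2m}\phi=(1-\cos^2\phi)^{l+m}=\sum_{\nu=0}^{l+m}\binom{l+m}{\nu}(-1)^\nu\cos^{2\nu}\phi$ and substitute $u=\cos\phi$, so that each summand turns into $\int_{-1}^{1}u^{2k+2\nu}\,du=2/(2k+2\nu+1)$. Multiplying the two factors and rearranging binomial coefficients then produces the second identity.

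I do not expect any genuine obstacle here. The only thing requiring care is the bookkeeping: converting binomial coefficients into the explicit factorial ratios so that the output is literally in the stated form, and keeping track of the numerical prefactors, where the $2\pi$ coming from $W_j$ combines with the $2$ coming from $\int_{-1}^{1}u^{2k+2\nu}\,du$ to give the leading constant $4\pi$ in the three-dimensional formula.
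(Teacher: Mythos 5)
Your proposal is correct and follows essentially the same route as the paper: pass to polar/spherical coordinates, separate the three-dimensional integral into $\int_0^\pi\cos^{2k}\phi\,\sin^{2l+2m+1}\phi\,d\phi$ times $M\bigl(2,1,(2l,2m)\bigr)$, and reduce everything to elementary one-dimensional integrals. The paper stops at that reduction with ``Then we can compute these,'' so your binomial expansion, Wallis integrals, and the substitution $u=\cos\phi$ simply supply the details the authors omit, and they check out against the stated formulas.
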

\begin{proof}
By using the polar coordinates, we have
\begin{align*}
  M\bigl(2,1,(2k,2l)\bigr)
& =
  \int_0^{2\pi}
  \cos^{2k}\theta
  \cdot
  \sin^{2l}\theta
  d\theta,
\\
  M\bigl(3,1,(2k,2l,2m)\bigr) 
& =
  \int_0^\pi 
  \left(
  \int_0^{2\pi}
  \cos^{2k}\theta
  \cdot
  \sin^{2l}\theta \cos^{2l}\varphi
  \cdot
  \sin^{2m}\theta \sin^{2m}\varphi
  d\varphi 
  \right)
  \sin\theta
  d\theta
\\
& =
  \int_0^\pi
  \cos^{2k}\theta \sin^{2l+2m+1}\theta
  d\theta
  \cdot
  M\bigl(2,1,(2l,2m)\bigr).
\end{align*}
Then we can compute these. 
\end{proof}
%
%
%%%%%%
%%%%%% section 3
%%%%%%
\section{One dimensional cases}
\label{section:one-dimensional-cases}
In this section we compute Hermite expansions 
of some tempered distributions on $\mathbb{R}$. 
Some results in the present section were first proved by Kagawa in \cite{kagawa}.
We give alternative proof of them. 
\par
First we consider monomials $x^p$ ($p\in\mathbb{N}\cup\{0\}$). 
The case $p=0$, that is, $x^0=1$, was computed in \cite[Lemma~2.1]{kagawa}. 
\begin{theorem}
\label{theorem:monomials1} 
Let $p$ be a nonnegative integer. 
If we set 
$$
x^p
=
\sum_{k=0}^\infty
a_k(p)h_k
\quad\text{in}\quad 
\mathscr{S}^\prime(\mathbb{R}),
$$
then for any nonnegative integers $q$ and $l$,  
\begin{align*}
  a_{2l}(2q)
& =
  \pi^{1/4}
  2^{-q+1/2}
  (2l)!^{1/2}
  \sum_{m=0}^l
  \frac{(2l-2m+2q)!}{m! (2l-2m)! (l-m+q)!}
  \cdot
  \left(-\frac{1}{2}\right)^m,
\\ 
  a_{2l+1}(2q)
& =
  0,
\\
  a_{2l}(2q+1)
& =
  0,
\\
  a_{2l+1}(2q+1)
& =
  \pi^{1/4}
  2^{-q}
  (2l+1)!^{1/2}
  \sum_{m=0}^l
  \frac{(2l-2m+2q+2)!}{m! (2l-2m+1)! (l-m+q+1)!}
  \cdot
  \left(-\frac{1}{2}\right)^m.
\end{align*}
In particular 
\begin{align*}
  1
& =
  \sum_{l=0}^\infty
  \frac{\pi^{1/4} (2l)!^{1/2}}{2^{l-1/2} l!} 
  h_{2l} 
  \quad\text{in}\quad
  \mathscr{S}^\prime(\mathbb{R}),
\\
  x
& =
  \sum_{l=0}^\infty
  \frac{\pi^{1/4} (2l+1)!^{1/2}}{2^{l-1} l!} 
  h_{2l+1}
  \quad\text{in}\quad
  \mathscr{S}^\prime(\mathbb{R}).
\end{align*}
\end{theorem}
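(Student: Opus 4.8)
The plan is to compute the Bargmann transform $B[x^p](z)$ explicitly, read off its Taylor coefficients $b_k$, and then translate into Hermite coefficients $a_k(p)$ via the dictionary $a_k(p)=\pi^{1/2}2^{(k+1)/2}\sqrt{k!}\,b_k$ recorded in the introduction. First I would write
$$
B[x^p](z)
=
2^{-1/2}\pi^{-3/4}
e^{-z^2/4}
\int_{\mathbb{R}}
e^{zx}x^p e^{-x^2/2}\,dx,
$$
and expand $e^{zx}=\sum_{j\geqslant 0}z^jx^j/j!$. Interchanging sum and integral (justified since the kernel is Schwartz and the series converges locally uniformly) gives
$$
B[x^p](z)
=
2^{-1/2}\pi^{-3/4}
e^{-z^2/4}
\sum_{j=0}^\infty
\frac{z^j}{j!}
\int_{\mathbb{R}}x^{j+p}e^{-x^2/2}\,dx.
$$
The moment $\int_{\mathbb{R}}x^{j+p}e^{-x^2/2}\,dx$ vanishes unless $j+p$ is even, and otherwise equals $2I(j+p)$ with $I$ as in Lemma~\ref{theorem:moments}; using the explicit values $I(2l)=(2l)!\sqrt{\pi}/(2^{l+1/2}l!)$, only terms with $j\equiv p\pmod 2$ survive. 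This already forces $a_{2l+1}(2q)=0$ and $a_{2l}(2q+1)=0$, which are the two trivial assertions.

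The substance is the remaining factor $e^{-z^2/4}$. I would expand $e^{-z^2/4}=\sum_{m\geqslant 0}(-1)^m z^{2m}/(4^m m!)$ and multiply the two power series in $z$, collecting the coefficient of $z^N$. For $p=2q$ (so $N=2l$ even) one gets
$$
b_{2l}
=
2^{-1/2}\pi^{-3/4}
\sum_{m=0}^{l}
\frac{(-1)^m}{4^m m!}
\cdot
\frac{2 I(2l-2m+2q)}{(2l-2m)!},
$$
and substituting $I(2l-2m+2q)=(2l-2m+2q)!\sqrt{\pi}/(2^{\,l-m+q+1/2}(l-m+q)!)$ and simplifying the powers of $2$ yields, after the conversion $a_{2l}(2q)=\pi^{1/2}2^{(2l+1)/2}\sqrt{(2l)!}\,b_{2l}$, exactly the stated formula with the factor $\pi^{1/4}2^{-q+1/2}(2l)!^{1/2}$ out front and the sum over $m$ of $(2l-2m+2q)!/(m!(2l-2m)!(l-m+q)!)\cdot(-1/2)^m$. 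The case $p=2q+1$ is entirely parallel: the relevant moment is $I(2l-2m+2q+2)=2^{\,l-m+q+1}(l-m+q+1)!$ from the odd-order part of Lemma~\ref{theorem:moments}, the half-integer powers of $2$ and the $\sqrt{\pi}$ disappear, and one lands on the second pair of formulas. Finally the special cases $p=0,1$ come from keeping only $q=0$ (resp.\ $q=0$ in the odd family), where the $m$-sum telescopes: for $x^0=1$ only $m=l$ contributes nonzero... actually one checks directly that with $q=0$ the whole sum collapses to the single term $m=l$ giving $(2l)!/(l!)\cdot\ldots$; rather than belabor this, one simply verifies $\sum_{m=0}^l \binom{l}{m}(-1/2)^m\cdot(\text{ratio})$ reduces to $1/l!$, producing the displayed $h_{2l}$- and $h_{2l+1}$-expansions.

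The main obstacle I anticipate is purely bookkeeping: tracking the powers of $2$ and $\pi^{1/4}$ through three places — the $I(\lambda)$ evaluation, the $e^{-z^2/4}$ expansion, and the $b_\alpha\mapsto a_\alpha$ normalization constant $\pi^{n/2}2^{(|\alpha|+n)/2}\alpha!^{1/2}$ — without sign or exponent errors, and making sure the binomial-type identity that produces the clean $p=0,1$ special cases is invoked correctly. There is no analytic difficulty: convergence of the Hermite series in $\mathscr{S}^\prime(\mathbb{R})$ is automatic from Simon's theorem (Theorem~\ref{theorem:simon}) since $x^p$ is a polynomial and its Bargmann transform is manifestly entire of the right growth, so once the coefficients are identified the expansion statement follows immediately.
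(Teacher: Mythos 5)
Your proposal is correct and follows essentially the same route as the paper: expand $e^{-z^2/4}$ and $e^{zx}$, evaluate the Gaussian moments via Lemma~\ref{theorem:moments}, rearrange the double sum by the index substitution $l=\mu+\nu$, $m=\mu$, and convert Taylor coefficients to Hermite coefficients with the normalization $\pi^{1/2}2^{(k+1)/2}k!^{1/2}$. One small point: for the special cases $p=0,1$ the sum does not collapse to the single term $m=l$; rather, as your self-correction suggests, the ratio of factorials becomes $1/(m!(l-m)!)$ and the binomial theorem gives $\sum_{m=0}^{l}\binom{l}{m}(-1/2)^m=2^{-l}$, which is exactly how the paper concludes.
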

\begin{proof}
We compute the Taylor expansion of $B(x^p)(z)$ 
for an arbitrary nonnegative integer $p$. 
By using the Taylor expansion of the exponential function, we have 
\begin{align}
  B(x^p) 
& =
  2^{-1/2} \pi^{-3/4} e^{-z^2/4} 
  \int_{\mathbb{R}}
  e^{zx-x^2/2}x^p
  dx
\nonumber
\\
& =
  2^{-1/2} \pi^{-3/4} 
  \sum_{\mu=0}^\infty
  \frac{(-1)^\mu z^{2\mu}}{\mu! 2^{2\mu}} 
  \sum_{\nu=0}^\infty
  \frac{z^\nu}{\nu!}
  \int_{\mathbb{R}}
  x^{p+\nu}
  e^{-x^2/2}
  dx.
\label{equation:4-1}
\end{align}
The integration in the last term vanishes unless $p+\nu$ is an even integer. 
We split our computations into two cases according to the parity of $p$. 
We make use of Lemma~\ref{theorem:moments} as 
$2I(2l)=2^{1/2}\pi^{1/2}(2l)!/2^ll!$. 
\par
For $p=2q$ ($q=0,1,2,\dotsc$), \eqref{equation:4-1} becomes 
\begin{align*}
  B(x^{2q})(z)
& =
  2^{-1/2} \pi^{-3/4} 
  \sum_{\mu=0}^\infty
  \frac{(-1)^\mu z^{2\mu}}{\mu! 2^{2\mu}} 
  \sum_{\nu=0}^\infty
  \frac{z^{2\nu}}{(2\nu)!}
  \int_{\mathbb{R}}
  x^{2(q+\nu)}
  e^{-x^2/2}
  dx
\\
& =
  2^{-1/2} \pi^{-3/4} 
  \sum_{\mu=0}^\infty
  \frac{(-1)^\mu z^{2\mu}}{\mu! 2^{2\mu}} 
  \sum_{\nu=0}^\infty
  \frac{2I(2p+2\nu) z^{2\nu}}{(2\nu)!}
\\
& =
  2^{-1/2} \pi^{-3/4} 
  \sum_{\mu=0}^\infty
  \frac{(-1)^\mu z^{2\mu}}{\mu! 2^{2\mu}} 
  \sum_{\nu=0}^\infty
  \frac{2^{1/2} \pi^{1/2} (2q+2\nu)! z^{2\nu}}{2^{q+\nu} (q+\nu)! (2\nu)!}
\\
& =
  \pi^{-1/4} 
  \sum_{\mu=0}^\infty
  \frac{(-1)^\mu z^{2\mu}}{\mu! 2^{2\mu}} 
  \sum_{\nu=0}^\infty
  \frac{(2q+2\nu)! z^{2\nu}}{2^{q+\nu} (q+\nu)! (2\nu)!}.
\end{align*}
If we rearrange the order of summations by setting 
$l=\mu+\nu$ and $m=\mu$, we have 
\begin{align*}
  B(x^{2q})(z)
& =
  \sum_{l=0}^\infty
  \left\{
  \pi^{-1/4}
  \sum_{m=0}^l
  \frac{(2l-2m+2q)!}{m! (l-m+q)! (2l-2m)!}
  \cdot
  \frac{(-1)^m}{2^{l+m+q}}
  \right\}
  z^{2l}
\\
& =
  \sum_{l=0}^\infty
  \left\{
  \pi^{-1/4} 2^{-l-q}
  \sum_{m=0}^l
  \frac{(2l-2m+2q)!}{m! (l-m+q)! (2l-2m)!}
  \cdot
  \left(-\frac{1}{2}\right)^m
  \right\}
  z^{2l}. 
\end{align*}
By using this, we obtain for any nonnegative integer $l$, 
$a_{2l+1}(2q)=0$ and 
\begin{align*}
  a_{2l}(2q)
& =
  \pi^{1/2} 2^{l+1/2} (2l)!^{1/2}
  \times
  \pi^{-1/4} 2^{-l-q}
  \sum_{m=0}^l
  \frac{(2l-2m+2q)!}{m! (l-m+q)! (2l-2m)!}
  \cdot
  \left(-\frac{1}{2}\right)^m
\\
& =
  \pi^{1/4} 2^{-q+1/2} (2l)!^{1/2}
  \sum_{m=0}^l
  \frac{(2l-2m+2q)!}{m! (l-m+q)! (2l-2m)!}
  \cdot
  \left(-\frac{1}{2}\right)^m.  
\end{align*}
\par
For $p=2q+1$ ($q=0,1,2,\dotsc$), \eqref{equation:4-1} becomes 
\begin{align*}
  B(x^{2q+1})(z)
& =
  2^{-1/2} \pi^{-3/4} 
  \sum_{\mu=0}^\infty
  \frac{(-1)^\mu z^{2\mu}}{\mu! 2^{2\mu}} 
  \sum_{\nu=0}^\infty
  \frac{z^{2\nu+1}}{(2\nu+1)!}
  \int_{\mathbb{R}}
  x^{2(q+\nu+1)}
  e^{-x^2/2}
  dx
\\
& =
  2^{-1/2} \pi^{-3/4} 
  \sum_{\mu=0}^\infty
  \frac{(-1)^\mu z^{2\mu}}{\mu! 2^{2\mu}} 
  \sum_{\nu=0}^\infty
  \frac{2I(2p+2\nu+2) z^{2\nu+1}}{(2\nu+1)!}
\\
& =
  2^{-1/2} \pi^{-3/4} 
  \sum_{\mu=0}^\infty
  \frac{(-1)^\mu z^{2\mu}}{\mu! 2^{2\mu}} 
  \sum_{\nu=0}^\infty
  \frac{2^{1/2} \pi^{1/2} (2q+2\nu+2)! z^{2\nu+1}}{2^{q+\nu+1} (q+\nu+1)! (2\nu+1)!}
\\
& =
  \pi^{-1/4} 
  \sum_{\mu=0}^\infty
  \frac{(-1)^\mu z^{2\mu}}{\mu! 2^{2\mu}} 
  \sum_{\nu=0}^\infty
  \frac{(2q+2\nu+2)! z^{2\nu+1}}{2^{q+\nu+1} (q+\nu+1)! (2\nu+1)!}.
\end{align*}
If we rearrange the order of summations by setting 
$l=\mu+\nu$ and $m=\mu$, we have 
\begin{align*}
&  B(x^{2q+1})(z)
\\
  =
& \sum_{l=0}^\infty
  \left\{
  \pi^{-1/4}
  \sum_{m=0}^l
  \frac{(2l-2m+2q+2)!}{m! (l-m+q+1)! (2l-2m+1)!}
  \cdot
  \frac{(-1)^m}{2^{l+m+q+1}}
  \right\}
  z^{2l+1}
\\
  =
& \sum_{l=0}^\infty
  \left\{
  \pi^{-1/4} 2^{-l-q-1}
  \sum_{m=0}^l
  \frac{(2l-2m+2q+2)!}{m! (l-m+q+1)! (2l-2m+1)!}
  \cdot
  \left(-\frac{1}{2}\right)^m
  \right\}
  z^{2l+1}. 
\end{align*}
By using this, we obtain that for any nonnegative integer $l$, 
$a_{2l}(2q+1)=0$ and 
\begin{align*}
&  a_{2l+1}(2q+1)
\\
  =
& \pi^{1/2} 2^{l+1} (2l+1)!^{1/2}
  \times
  \pi^{-1/4} 2^{-l-q-1}
\\
  \times
& \sum_{m=0}^l
  \frac{(2l-2m+2q+2)!}{m! (l-m+q+1)! (2l-2m+1)!}
  \cdot
  \left(-\frac{1}{2}\right)^m
\\
  =
& \pi^{1/4} 2^{-q} (2l+1)!^{1/2}
  \sum_{m=0}^l
  \frac{(2l-2m+2q+2)!}{m! (l-m+q+1)! (2l-2m+1)!}
  \cdot
  \left(-\frac{1}{2}\right)^m.  
\end{align*}
\par
For special cases $p=0$ and $p=1$, we have 
\begin{align*}
  a_{2l}(0)
& =
  \pi^{1/4} 2^{1/2} (2l)!^{1/2} 
  \sum_{m=0}^l
  \frac{1}{m!(l-m)!}
  \left(-\frac{1}{2}\right)^m
\\
& =
  \frac{\pi^{1/4} 2^{1/2} (2l)!^{1/2}}{l!}
  \left(1-\frac{1}{2}\right)^l
  =
  \frac{\pi^{1/4} (2l)!^{1/2}}{2^{l-1/2}l!},
\\
  a_{2l+1}(1)
& =
  \pi^{1/4} (2l+1)!^{1/2}
  \sum_{m=0}^l
  \frac{2}{m!(l-m)!}
  \left(-\frac{1}{2}\right)^m
\\
& =
  \frac{\pi^{1/4} (2l+1)!^{1/2}}{l!}
  2
  \left(1-\frac{1}{2}\right)^l
  =
  \frac{\pi^{1/4} (2l+1)!^{1/2}}{2^{l-1} l!}   
\end{align*}
for any nonnegative integer $l$. 
This completes the proof.
\end{proof}
Let $Y(x)$ be the Heaviside function defined by 
$Y(x)=1$ for $x>0$ and $Y(x)=0$ for $x<0$. 
We consider the complex power $x_+^\lambda$ for $\operatorname{Re}(\lambda)>-1$, 
which is defined by 
$x_+^\lambda=x^\lambda$ for $x>0$ and $x_+^\lambda=0$ for $x<0$. 
Note that $Y(x) \equiv x_+^0$. 
The cases of $\lambda \in \mathbb{N}\cup\{0\}$ were computed in \cite[Theorem~2.2, Corollary~2.3]{kagawa}. 
We denote by $[t]$ the largest integer not greater than $t\in\mathbb{R}$. 
\begin{theorem}
\label{theorem:complex-power}
Let $\lambda$ be a complex number satisfying $\operatorname{Re}(\lambda)>-1$. 
If we set 
$$
x_+^\lambda 
= 
\sum_{k=0}^\infty 
a_k(\lambda) 
h_k 
\quad\text{in}\quad
\mathscr{S}^\prime(\mathbb{R}), 
$$
then for any nonnegative integer $k$, 
$$
a_k(\lambda)
= 
\pi^{-1/4} 
2^{k+(\lambda-1)/2} 
k!^{1/2} 
\sum_{m=0}^{[k/2]} 
\cfrac{\Gamma\left(\dfrac{k+\lambda+1}{2}-m\right)}{m! (k-2m)!}
\cdot
\frac{(-1)^m}{2^{3m}}.
$$
In particular, regarding $x_+^0=Y(x)$, we have for any nonnegative integer $l$, 
\begin{align*}
  a_{2l}(0)
& =
  \frac{\pi^{1/4} (2l)!^{1/2}}{2^{l+1/2} l!},
\\
  a_{2l+1}(0)
& =
  \pi^{-1/4} 2^{2l+1/2} (2l+1)!^{1/2} 
  \sum_{m=0}^l
  \frac{(l-m)!}{m! (2l-2m+1)!}
  \cdot
  \frac{(-1)}{2^{3m}}.
\end{align*}
\end{theorem}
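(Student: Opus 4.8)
The plan is to compute the Taylor expansion of $B(x_+^\lambda)(z)$ at the origin and then read off the coefficients $a_k(\lambda)$ via the conversion rule stated in the introduction, namely $a_k(\lambda) = \pi^{1/4}\,2^{(k+1)/2}\,k!^{1/2}\cdot b_k$ where $BT(z)=\sum_k b_k z^k$. This parallels exactly the strategy used in the proof of Theorem~\ref{theorem:monomials1}, but now the integral kernel is paired against $x_+^\lambda$, which restricts the $x$-integration to $(0,\infty)$ and means that \emph{all} powers $x^{\lambda+\nu}$ contribute rather than just the ones making $p+\nu$ even.

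First I would write, expanding $e^{zx}$ in its Taylor series,
\begin{align*}
  B(x_+^\lambda)(z)
& =
  2^{-1/2}\pi^{-3/4} e^{-z^2/4}
  \int_0^\infty
  e^{zx-x^2/2} x^\lambda
  dx
\\
& =
  2^{-1/2}\pi^{-3/4}
  \sum_{\mu=0}^\infty
  \frac{(-1)^\mu z^{2\mu}}{\mu!\,2^{2\mu}}
  \sum_{\nu=0}^\infty
  \frac{z^\nu}{\nu!}
  \int_0^\infty
  x^{\lambda+\nu}
  e^{-x^2/2}
  dx.
\end{align*}
By Lemma~\ref{theorem:moments}, the inner integral is $I(\lambda+\nu)=2^{(\lambda+\nu-1)/2}\Gamma\bigl(\tfrac{\lambda+\nu+1}{2}\bigr)$, which is finite for all $\nu\geqslant 0$ since $\operatorname{Re}(\lambda)>-1$. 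Substituting this in gives a double series in $z$; the key combinatorial step is to collect the coefficient of $z^k$ by setting $k=2\mu+\nu$, i.e.\ reindexing with $m=\mu$ and $\nu=k-2m$ where $m$ runs from $0$ to $[k/2]$. This produces
$$
B(x_+^\lambda)(z)
=
\sum_{k=0}^\infty
\left\{
2^{-1/2}\pi^{-3/4}
\sum_{m=0}^{[k/2]}
\frac{(-1)^m}{m!\,2^{2m}}
\cdot
\frac{1}{(k-2m)!}
\cdot
2^{(\lambda+k-2m-1)/2}
\Gamma\!\left(\frac{\lambda+k-2m+1}{2}\right)
\right\}
z^k,
$$
and then collecting the powers of $2$ inside the brace (the exponent is $-1/2-2m+(\lambda+k-2m-1)/2 = (\lambda+k)/2-1-3m$) and multiplying through by the conversion factor $\pi^{1/4}2^{(k+1)/2}k!^{1/2}$ yields the claimed formula for $a_k(\lambda)$ after the $\pi$ powers combine to $\pi^{-1/4}$ and the $2$ powers to $2^{k+(\lambda-1)/2}/2^{3m}$.

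For the special case $\lambda=0$ (the Heaviside function $Y=x_+^0$), I would simply substitute $\lambda=0$ into the general formula and simplify separately for $k=2l$ and $k=2l+1$. When $k=2l$ the sum runs $m=0,\dots,l$ with $\Gamma(l-m+\tfrac12)=\tfrac{(2l-2m)!}{2^{2l-2m}(l-m)!}\sqrt{\pi}$, and after inserting this the inner sum telescopes to a binomial-type identity of the form $\sum_m \binom{l}{m}(-1/2)^m = (1/2)^l$, exactly as in the last computation of the proof of Theorem~\ref{theorem:monomials1}; this collapses $a_{2l}(0)$ to the stated closed form $\pi^{1/4}(2l)!^{1/2}/(2^{l+1/2}l!)$. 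When $k=2l+1$ the argument of the Gamma function is $l-m+1$, an integer, so $\Gamma(l-m+1)=(l-m)!$ and no further collapse occurs, which is why $a_{2l+1}(0)$ remains a genuine sum. I expect the main obstacle to be purely bookkeeping: correctly tracking the powers of $2$ and $\pi$ through the reindexing and the conversion factor, and being careful that the interchange of the two infinite summations (justified by absolute convergence of the entire function $B(x_+^\lambda)$, guaranteed since the Bargmann transform of a tempered distribution is entire) is legitimate. There is no deep analytic difficulty; the computation is a direct adaptation of the monomial case with $I(\lambda+\nu)$ replacing $2I(2\text{-even})$.
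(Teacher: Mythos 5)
Your proposal is correct and follows essentially the same route as the paper: expand $e^{zx}$ under the integral, evaluate the half-line moments via Lemma~\ref{theorem:moments}, reindex with $k=2\mu+\nu$, and convert Taylor coefficients to Hermite coefficients, with the same binomial collapse for $a_{2l}(0)$. The only slip is that you twice write the conversion factor as $\pi^{1/4}2^{(k+1)/2}k!^{1/2}$ when it should be $\pi^{1/2}2^{(k+1)/2}k!^{1/2}$ (the $n=1$ case of the rule in the introduction); since your final power of $\pi$ is the correct $\pi^{-1/4}$, this is evidently a typo rather than an error in the argument.
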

\begin{proof}
In the same way as the proof of Theorem~\ref{theorem:monomials1}, we have 
\begin{align*}
  B(x_+^\lambda)(z)
& =
  2^{-1/2} \pi^{-3/4} e^{-z^2/4} 
  \int_{\mathbb{R}}
  e^{zx-x^2/2} 
  x_+^\lambda 
  dx
\\
& =
  2^{-1/2} \pi^{-3/4} e^{-z^2/4} 
  \int_0^\infty
  e^{zx-x^2/2}
  x^\lambda 
  dx
\\
& =
  2^{-1/2} \pi^{-3/4}
  \sum_{\mu=0}^\infty
  \frac{(-1)^\mu z^{2\mu}}{\mu! 2^{2\mu}} 
  \sum_{\nu=0}^\infty 
  \frac{z^\nu}{\nu!} 
  \int_0^\infty 
  x^{\lambda+\nu}
  e^{-x^2/2}
  dx
\\
& =
  2^{-1/2} \pi^{-3/4}
  \sum_{\mu=0}^\infty
  \frac{(-1)^\mu z^{2\mu}}{\mu! 2^{2\mu}} 
  \sum_{\nu=0}^\infty 
  \frac{z^\nu}{\nu!} 
  \cdot
  2^{(\lambda+\nu-1)/2}
  \Gamma\left(\frac{\lambda+\nu+1}{2}\right)
\\
& =
  \pi^{-3/4} 2^{\lambda/2-1} 
  \sum_{\mu=0}^\infty
  \frac{(-1)^\mu z^{2\mu}}{\mu! 2^{2\mu}} 
  \sum_{\nu=0}^\infty 
  \cfrac{2^{\nu/2} \Gamma\left(\dfrac{\lambda+\nu+1}{2}\right) z^\nu}{\nu!}. 
\end{align*}
If we rearrange the order of summations by setting 
$k=2\mu+\nu$ and $m=\mu$, we have 
\begin{align*}
  B(x_+^\lambda)(z)
& =
  \sum_{k=0}^\infty
  \left\{
  \pi^{-3/4} 2^{\lambda/2-1} 
  \sum_{m=0}^{[k/2]}
  \frac{ \Gamma\left(\dfrac{k+\lambda+1}{2}-m\right)}{m! (k-2m)!}
  \cdot
  \frac{(-1)^m 2^{k/2-m}}{2^{2m}}
  \right\}
  z^k 
\\
& =
  \sum_{k=0}^\infty
  \left\{
  \pi^{-3/4} 2^{(k+\lambda)/2-1} 
  \sum_{m=0}^{[k/2]}
  \frac{ \Gamma\left(\dfrac{k+\lambda+1}{2}-m\right)}{m! (k-2m)!}
  \cdot
  \frac{(-1)^m}{2^{3m}}
  \right\}
  z^k. 
\end{align*}
By using this, we obtain that for any nonnegative integer $k$, 
\begin{align*}
  a_k(\lambda)
& =
  \pi^{1/2} 2^{k/2+1/2} k!^{1/2}
  \times
  \pi^{-3/4} 2^{(k+\lambda)/2-1} 
  \sum_{m=0}^{[k/2]}
  \frac{ \Gamma\left(\dfrac{k+\lambda+1}{2}-m\right)}{m! (k-2m)!}
  \cdot
  \frac{(-1)^m}{2^{3m}}
\\
& =
  \pi^{-1/4} 2^{k+(\lambda-1)/2} k!^{1/2} 
  \sum_{m=0}^{[k/2]}
  \frac{ \Gamma\left(\dfrac{k+\lambda+1}{2}-m\right)}{m! (k-2m)!}
  \cdot
  \frac{(-1)^m}{2^{3m}}. 
\end{align*}
\par
Recall Lemma~\ref{theorem:moments} and note that 
$\Gamma(l+1/2)=2^{-l+1/2}I(2l)=\pi^{1/2}(2l)!/2^{2l}l!$. 
For the spacial case $\lambda=0$, we have 
\begin{align*}
  a_{2l}(0)
& =
  \pi^{-1/4} 2^{2l-1/2} (2l)!^{1/2} 
  \sum_{m=0}^{l}
  \frac{ \Gamma\left(l-m+\dfrac{1}{2}\right)}{m! (2l-2m)!}
  \cdot
  \frac{(-1)^m}{2^{3m}}
\\
& =
  \pi^{-1/4} 2^{2l-1/2} (2l)!^{1/2} 
  \sum_{m=0}^{l}
  \frac{\pi^{1/2} (2l-2m)!}{m! (2l-2m)! 2^{2l-2m} (l-m)!}
  \cdot
  \frac{(-1)^m}{2^{3m}} 
\\
& =
  \pi^{1/4} 2^{-1/2} (2l)!^{1/2} 
  \sum_{m=0}^l
  \frac{1}{m! (l-m)!}
  \cdot
  \frac{(-1)^m}{2^m}
\\
& =
  \frac{\pi^{1/4} (2l)!^{1/2}}{2^{l+1/2} l!},
\\
  a_{2l+1}(0)
& = 
  \pi^{-1/4} 2^{2l+1/2} (2l+1)!^{1/2} 
  \sum_{m=0}^{l}
  \frac{ \Gamma(l-m+1)}{m! (2l-2m+1)!}
  \cdot
  \frac{(-1)^m}{2^{3m}}
\\
& = 
  \pi^{-1/4} 2^{2l+1/2} (2l+1)!^{1/2} 
  \sum_{m=0}^{l}
  \frac{(l-m)!}{m! (2l-2m+1)!}
  \cdot
  \frac{(-1)^m}{2^{3m}}
\end{align*}
for any nonnegative integer $l$. 
This completes the proof. 
\end{proof} 
We can also compute the Hermite expansion of 
$\lvert{x}\rvert^\lambda=x_+^\lambda+(-x)_+^\lambda$ 
for $\operatorname{Re}(\lambda)>-1$. 
\begin{corollary}
\label{theorem:abs} 
Let $\lambda$ be a complex number satisfying $\operatorname{Re}(\lambda)>-1$. 
If we set 
$$
\lvert{x}\rvert^\lambda 
= 
\sum_{k=0}^\infty 
a_k(\lambda) 
h_k 
\quad\text{in}\quad
\mathscr{S}^\prime(\mathbb{R}), 
$$
then for any nonnegative integer $l$, $a_{2l+1}(\lambda)=0$ and 
$$
a_{2l}(\lambda)
= 
\pi^{-1/4} 
2^{2l+(\lambda+1)/2} 
(2l)!^{1/2} 
\sum_{m=0}^{l} 
\cfrac{\Gamma\left(l-m+\dfrac{\lambda+1}{2}\right)}{m! (2l-2m)!}
\cdot
\frac{(-1)^m}{2^{3m}}.
$$
\end{corollary}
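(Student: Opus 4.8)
The plan is to reduce everything to Theorem~\ref{theorem:complex-power} by means of the decomposition $\lvert{x}\rvert^\lambda = x_+^\lambda + (-x)_+^\lambda$ together with the parity of the Hermite functions. First I would recall that $h_k(-x)=(-1)^k h_k(x)$, which is immediate from the Rodrigues-type formula defining $h_k$. Writing $x_+^\lambda = \sum_{k} c_k h_k$ in $\mathscr{S}^\prime(\mathbb{R})$ with $c_k$ given by Theorem~\ref{theorem:complex-power}, the reflected distribution then satisfies $\langle (-x)_+^\lambda, h_k\rangle = \langle x_+^\lambda, h_k(-\cdot)\rangle = (-1)^k c_k$, so that
$$
\lvert{x}\rvert^\lambda
=
\sum_{k} \bigl(1+(-1)^k\bigr) c_k\, h_k
=
\sum_{l=0}^\infty 2c_{2l}\, h_{2l}
\quad\text{in}\quad
\mathscr{S}^\prime(\mathbb{R}),
$$
all odd-index coefficients vanishing identically. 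Equivalently, one may compute $B(\lvert{x}\rvert^\lambda)(z)$ directly in the manner of the proof of Theorem~\ref{theorem:complex-power}: splitting $\int_{\mathbb{R}}$ into $\int_0^\infty$ and $\int_{-\infty}^0$ and substituting $x\mapsto -x$ in the latter replaces the factor $\sum_{\nu} z^\nu/\nu!$ by $\sum_{\nu}\bigl(1+(-1)^\nu\bigr)z^\nu/\nu!$, which kills all odd powers of $z$ and doubles the even ones, giving the same conclusion.

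It then remains only to substitute the even-index formula from Theorem~\ref{theorem:complex-power}, namely
$$
c_{2l}
=
\pi^{-1/4}\, 2^{2l+(\lambda-1)/2}\, (2l)!^{1/2}
\sum_{m=0}^{l}
\cfrac{\Gamma\left(\dfrac{2l+\lambda+1}{2}-m\right)}{m!\,(2l-2m)!}
\cdot
\frac{(-1)^m}{2^{3m}},
$$
multiply by $2$, and simplify the constants using $2\cdot 2^{(\lambda-1)/2}=2^{(\lambda+1)/2}$ and $\dfrac{2l+\lambda+1}{2}-m = l-m+\dfrac{\lambda+1}{2}$. This produces exactly the asserted expression for $a_{2l}(\lambda)$, together with $a_{2l+1}(\lambda)=0$.

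Since the argument is a one-line reduction followed by a trivial rewriting of constants, there is essentially no obstacle. The only point worth a word of care is the passage from the $L^2$ identity $h_k(-x)=(-1)^k h_k(x)$ to the statement that reflection acts on the Hermite coefficients of a tempered distribution by the sign $(-1)^k$; this is immediate from the definition of the pairing $\langle\cdot,\cdot\rangle$ and the fact that $x_+^\lambda$ and $(-x)_+^\lambda$ are genuine tempered distributions for $\operatorname{Re}(\lambda)>-1$, so there is nothing delicate here.
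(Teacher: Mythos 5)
Your argument is correct and is precisely the one the paper intends: it combines Theorem~\ref{theorem:complex-power} with the parity identity $h_k(-x)=(-1)^k h_k(x)$ applied to the decomposition $\lvert{x}\rvert^\lambda=x_+^\lambda+(-x)_+^\lambda$, and your constant bookkeeping ($2\cdot 2^{(\lambda-1)/2}=2^{(\lambda+1)/2}$, $\tfrac{2l+\lambda+1}{2}-m=l-m+\tfrac{\lambda+1}{2}$) checks out. You have simply supplied the details that the paper's proof omits.
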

\begin{proof}
Combining Theorem~\ref{theorem:complex-power} 
and the fact $h_k(-x)=(-1)^kh_k(x)$, 
we can prove Corollary~\ref{theorem:abs}. 
Here we omit the detail.  
\end{proof}
We can also obtain the Hermite expansion of $\operatorname{sgn}(x)=Y(x)-Y(-x)$. 
See \cite[Proposition~2.5]{kagawa} also. 
\begin{corollary}
\label{theorem:sgn}
If we set 
$$
\operatorname{sgn}(x)
=
\sum_{k=0}^\infty 
a_k h_k
\quad\text{in}\quad
\mathscr{S}^\prime(\mathbb{R}), 
$$
then for any nonnegative integer $l$, 
$$
a_{2l}
=
0, 
\quad
a_{2l+1}
= 
\pi^{-1/4} 2^{2l+3/2} (2l+1)!^{1/2} 
\sum_{m=0}^l 
\frac{(l-m)!}{m! (2l-2m+1)!}
\cdot
\frac{(-1)^m}{2^{3m}}.
$$
\end{corollary}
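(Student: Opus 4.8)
The plan is to avoid computing $B(\operatorname{sgn})$ from scratch and instead exploit the decomposition $\operatorname{sgn}(x)=Y(x)-Y(-x)=x_+^0-(-x)_+^0$ together with the $\lambda=0$ case of Theorem~\ref{theorem:complex-power}. First I would recall from that theorem that $Y(x)=x_+^0=\sum_{k=0}^\infty a_k(0)h_k$ in $\mathscr{S}^\prime(\mathbb{R})$, where $a_{2l}(0)=\pi^{1/4}(2l)!^{1/2}/(2^{l+1/2}l!)$ and
$$
a_{2l+1}(0)
=
\pi^{-1/4}2^{2l+1/2}(2l+1)!^{1/2}
\sum_{m=0}^{l}
\frac{(l-m)!}{m!(2l-2m+1)!}\cdot\frac{(-1)^m}{2^{3m}}
$$
for every nonnegative integer $l$.

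Next, using the parity relation $h_k(-x)=(-1)^k h_k(x)$ and the sequential continuity of the pullback $T\mapsto T(-\cdot)$ on $\mathscr{S}^\prime(\mathbb{R})$, I would deduce that $Y(-x)=\sum_k a_k(0)h_k(-x)=\sum_k(-1)^k a_k(0)h_k(x)$ in $\mathscr{S}^\prime(\mathbb{R})$. Subtracting the two Hermite series — which is legitimate because both converge in $\mathscr{S}^\prime(\mathbb{R})$ (Theorem~\ref{theorem:simon}) and subtraction of tempered distributions is continuous — the even-index terms cancel and the odd-index terms double:
$$
\operatorname{sgn}(x)
=
\sum_{k=0}^\infty\bigl(1-(-1)^k\bigr)a_k(0)h_k(x)
=
\sum_{l=0}^\infty 2a_{2l+1}(0)h_{2l+1}(x)
\quad\text{in}\quad
\mathscr{S}^\prime(\mathbb{R}).
$$
Hence $a_{2l}=0$ and $a_{2l+1}=2a_{2l+1}(0)$, and inserting the formula for $a_{2l+1}(0)$ recorded above yields exactly the claimed expression with the factor $2^{2l+3/2}=2\cdot2^{2l+1/2}$.

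As an alternative route (and a useful sanity check) one can compute the Bargmann transform directly: $B(\operatorname{sgn})(z)=2^{-1/2}\pi^{-3/4}e^{-z^2/4}\bigl(\int_0^\infty-\int_{-\infty}^0\bigr)e^{zx-x^2/2}\,dx$, and the substitution $x\mapsto-x$ in the second integral shows that $B(\operatorname{sgn})(z)$ is precisely the odd part of $2\,B(x_+^0)(z)$; one then keeps only the odd powers of $z$ in the Taylor expansion already obtained in the proof of Theorem~\ref{theorem:complex-power} and converts Taylor coefficients to Hermite coefficients via the dictionary stated in the introduction. I do not expect any genuine obstacle here; the only point deserving an explicit (one-line) justification is the termwise subtraction of the two Hermite expansions in $\mathscr{S}^\prime(\mathbb{R})$, which follows from the convergence in Theorem~\ref{theorem:simon} together with the linearity and continuity of operations on tempered distributions.
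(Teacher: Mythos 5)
Your proposal is correct and follows essentially the same route as the paper, which proves the corollary by combining the $\lambda=0$ case of Theorem~\ref{theorem:complex-power} with the parity relation $h_k(-x)=(-1)^kh_k(x)$ (the paper omits the details you supply). The cancellation of even-index coefficients and the doubling $a_{2l+1}=2a_{2l+1}(0)$, giving the factor $2^{2l+3/2}=2\cdot 2^{2l+1/2}$, is exactly the intended computation.
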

\begin{proof}
Combining Theorem~\ref{theorem:complex-power} and the fact $h_k(-x)=(-1)^kh_k(x)$, 
we can prove Corollary~\ref{theorem:sgn}. Here we omit the detail.  
\end{proof}
Let $c$ be a real number. 
We consider $\delta_c$ which is said to be the Dirac measure at $c$ in $\mathbb{R}$. 
The case of $c=0$ was computed in \cite[Lemma~2.1]{kagawa}. 
\begin{theorem}
\label{theorem:diracmeasure1} 
Let $c$ be a real number. 
If we set 
$$
\delta_c 
= 
\sum_{k=0}^\infty 
a_k(c) h_k
\quad\text{in}\quad 
\mathscr{S}^\prime(\mathbb{R}), 
$$
then for any nonnegative integer $k$, 
$$
a_k(c) 
= 
\pi^{-1/4} 2^{k/2} k!^{1/2} e^{-c^2/2} 
\sum_{m=0}^{[k/2]} 
\frac{1}{m! (k-2m)!}
\cdot
\frac{(-1)^m c^{k-2m}}{2^{2m}}. 
$$
In particular, regarding $\delta_0$, we have for any nonnegative integer $l$, 
$$
a_{2l}(0) 
= 
\frac{(-1)^l (2l)!^{1/2}}{\pi^{1/4} 2^l l!}, 
\quad
a_{2l+1}(0)=0.
$$
\end{theorem}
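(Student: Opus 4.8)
The plan is to follow the same strategy used in the proofs of Theorem~\ref{theorem:monomials1} and Theorem~\ref{theorem:complex-power}: compute the Taylor expansion of $B(\delta_c)(z)$ directly, read off the coefficients $b_k$, and then apply the conversion formula
$a_k(c)=\pi^{1/2}2^{(k+1)/2}k!^{1/2}\cdot b_k$
to recover the Hermite coefficients. The computation of $B(\delta_c)$ is in fact the easiest of the section, since the Dirac measure collapses the defining integral: by the very definition of the Bargmann transform applied to the tempered distribution $\delta_c$,
$$
B(\delta_c)(z)
=
2^{-1/2}\pi^{-3/4}
\langle \delta_c, e^{-(z^2/4-zx+x^2/2)}\rangle
=
2^{-1/2}\pi^{-3/4}
e^{-(z^2/4-cz+c^2/2)}.
$$

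Next I would expand the exponential into a power series in $z$. Writing $e^{-z^2/4}=\sum_{\mu\ge 0}\frac{(-1)^\mu z^{2\mu}}{\mu!\,2^{2\mu}}$ and $e^{cz}=\sum_{\nu\ge 0}\frac{c^\nu z^\nu}{\nu!}$, and factoring out the constant $e^{-c^2/2}$, one gets a double sum; rearranging by setting $k=2\mu+\nu$, $m=\mu$ (exactly as in the previous two proofs) yields
$$
B(\delta_c)(z)
=
\sum_{k=0}^\infty
\left\{
2^{-1/2}\pi^{-3/4}e^{-c^2/2}
\sum_{m=0}^{[k/2]}
\frac{(-1)^m c^{k-2m}}{m!\,(k-2m)!\,2^{2m}}
\right\}
z^k.
$$
So $b_k=2^{-1/2}\pi^{-3/4}e^{-c^2/2}\sum_{m=0}^{[k/2]}\frac{(-1)^m c^{k-2m}}{m!\,(k-2m)!\,2^{2m}}$, and multiplying by $\pi^{1/2}2^{(k+1)/2}k!^{1/2}$ gives precisely the claimed formula for $a_k(c)$, since $\pi^{1/2}\cdot 2^{-1/2}\cdot 2^{(k+1)/2}\cdot\pi^{-3/4}=\pi^{-1/4}2^{k/2}$.

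Finally, for the special case $c=0$, all terms with $k-2m>0$ vanish, so only $m=[k/2]$ survives and only even $k=2l$ contributes: $a_{2l+1}(0)=0$ and $a_{2l}(0)=\pi^{-1/4}2^l(2l)!^{1/2}\cdot\frac{(-1)^l}{l!}$, which is the stated value (up to the trivial rewriting $\pi^{-1/4}2^l/(2^l\cdot 2^l)$ — here $2^{2l}$ in the denominator from $m=l$ cancels, leaving $\pi^{-1/4}(-1)^l(2l)!^{1/2}/(2^l l!)$). I do not anticipate any genuine obstacle: the only point requiring a word of justification is that $B$ extends to $\mathscr{S}^\prime(\mathbb{R})$ and acts on $\delta_c$ by evaluating the (Schwartz-class) kernel at $x=c$, which is already asserted in the introduction, and that the resulting entire function's Taylor series converges in $B(\mathscr{S}^\prime(\mathbb{R}))$ so that term-by-term transfer back via $B^\ast$ is legitimate — this is the same convergence issue handled implicitly in the earlier proofs, and the growth estimate $|a_k(c)|\le C(1+k)^m$ needed to invoke Simon's theorem follows from the crude bound $|b_k|\le C e^{-c^2/2}(1+|c|)^k\cdot\frac{2^k}{k!}\sum_m 1$, which decays super-polynomially in $k$.
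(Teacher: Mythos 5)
Your proposal is correct and follows essentially the same route as the paper's proof: compute $B(\delta_c)(z)=2^{-1/2}\pi^{-3/4}e^{-z^2/4+cz-c^2/2}$, expand the two exponential factors, rearrange by $k=2\mu+\nu$, $m=\mu$, and convert the Taylor coefficients via $a_k=\pi^{1/2}2^{(k+1)/2}k!^{1/2}b_k$. Your version is in fact slightly cleaner, since you carry the factor $e^{-c^2/2}$ correctly throughout (the paper's intermediate displays have a typo $e^{-c^2}$) and you make explicit the convergence justification via Simon's growth estimate, which the paper leaves implicit.
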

\begin{proof}
By using the Taylor expansion of the exponential function, we have 
\begin{align*}
  B(\delta_c)(z)
& =
  2^{-1/2} \pi^{-3/4} e^{-z^2/4+cz-c^2/2}
\\
& =
  2^{-1/2} \pi^{-3/4} e^{-c^2/2}
  \sum_{\mu=0}^\infty
  \frac{(-1)^\mu z^{2\mu}}{\mu! 2^{2\mu}} 
  \sum_{\nu=0}^\infty 
  \frac{c^\nu z^\nu}{\nu!}.  
\end{align*}
If we rearrange the summations by setting $k=2\mu+\nu$ and $m=\mu$, we have 
$$
B(\delta_c)(z)
=
\sum_{k=0}^\infty
\left\{
2^{-1/2} \pi^{-3/4} e^{-c^2} 
\sum_{m=0}^{[k/2]} 
\frac{1}{m! (k-2m)!}
\cdot
\frac{(-1)^m c^{k-2m}}{2^{2m}}
\right\}
z^k.
$$
Then we obtain for any nonnegative integer $k$, 
\begin{align*}
  a_k(c)
& =
  \pi^{1/2} 2^{k/2+1/2} k!^{1/2} 
  \times
  2^{-1/2} \pi^{-3/4} e^{-c^2} 
  \sum_{m=0}^{[k/2]} 
  \frac{1}{m! (k-2m)!}
  \cdot
  \frac{(-1)^m c^{k-2m}}{2^{2m}} 
\\
& =
  \pi^{-1/4} 2^{k/2} k!^{1/2} e^{-c^2/2} 
  \sum_{m=0}^{[k/2]} 
  \frac{1}{m! (k-2m)!}
  \cdot
  \frac{(-1)^m c^{k-2m}}{2^{2m}}. 
\end{align*}
\par
For the special case $c=0$, substitute $c=0$ into the above. 
The essential contribution to $a_k(0)$ is given by the integer of the form $m=k/2$ 
in the summation on $m$. Thus we can compute $a_k(0)$. Here we omit the detail. 
\end{proof}
Here we recall the definition of the tempered distribution $\operatorname{vp}1/x$ in $\mathbb{R}$. 
For any $\phi\in\mathscr{S}(\mathbb{R})$, this is defined by 
$$
\left\langle
\operatorname{vp}\frac{1}{x}, \phi
\right\rangle
=
\lim_{\varepsilon\downarrow0} 
\int_{\lvert{x}\rvert>\varepsilon} 
\frac{\phi(x)}{x}
dx. 
$$
We compute the Hermite expansion of this. 
See \cite[Proposition~2.6]{kagawa} also. 
\begin{theorem}
\label{theorem:vp}
If we set 
$$
\operatorname{vp}\frac{1}{x}
= 
\sum_{k=0}^\infty
a_k h_k 
\quad\text{in}\quad
\mathscr{S}^\prime(\mathbb{R}), 
$$
then for any nonnegative integer $l$, 
$$
a_{2l}
=
0, 
\quad
a_{2l+1}
=
2\pi^{1/4}(2l+1)!^{1/2} 
\sum_{m=0}^l 
\frac{1}{m! (l-m)! (2l-2m+1)} 
\cdot
\frac{(-1)^m}{2^m}.
$$
\end{theorem}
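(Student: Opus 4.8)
The plan is to follow the same strategy as in the preceding theorems: compute the Taylor expansion of $B\bigl(\operatorname{vp}1/x\bigr)(z)$ and then read off the Hermite coefficients via the correspondence $a_k=\pi^{1/2}2^{(k+1)/2}k!^{1/2}b_k$ recalled in Section~\ref{section:introduction}.

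The first step is to evaluate the action of $\operatorname{vp}1/x$ on the Schwartz kernel $e^{zx-x^2/2}$ for fixed $z\in\mathbb{C}$. Since the Gaussian factor $e^{-x^2/2}$ is even while $\operatorname{vp}1/x$ is odd, only the odd part of $e^{zx}$ contributes, and because $\sinh(zx)/x$ is bounded near the origin the principal-value limit disappears; thus
\[
\left\langle\operatorname{vp}\frac{1}{x},\,e^{zx-x^2/2}\right\rangle
=
\int_{\mathbb{R}}\frac{\sinh(zx)}{x}\,e^{-x^2/2}\,dx,
\qquad
B\!\left(\operatorname{vp}\frac{1}{x}\right)(z)
=
2^{-1/2}\pi^{-3/4}e^{-z^2/4}\int_{\mathbb{R}}\frac{\sinh(zx)}{x}\,e^{-x^2/2}\,dx.
\]
Expanding $\sinh(zx)/x=\sum_{\nu=0}^\infty z^{2\nu+1}x^{2\nu}/(2\nu+1)!$ and integrating term by term — legitimate since the series converges uniformly on compact $x$-intervals and has Gaussian-controlled tails — then applying Lemma~\ref{theorem:moments} in the form $2I(2\nu)=2^{1/2}\pi^{1/2}(2\nu)!/2^\nu\nu!$ together with $(2\nu)!/(2\nu+1)!=1/(2\nu+1)$, I obtain
\[
\int_{\mathbb{R}}\frac{\sinh(zx)}{x}\,e^{-x^2/2}\,dx
=
2^{1/2}\pi^{1/2}\sum_{\nu=0}^\infty\frac{z^{2\nu+1}}{(2\nu+1)\,2^\nu\,\nu!}.
\]

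Next I would multiply by $2^{-1/2}\pi^{-3/4}e^{-z^2/4}$, insert the Taylor series $e^{-z^2/4}=\sum_{\mu=0}^\infty(-1)^\mu z^{2\mu}/(\mu!\,2^{2\mu})$, and rearrange with $l=\mu+\nu$, $m=\mu$ exactly as in the proof of Theorem~\ref{theorem:monomials1}. This gives $b_{2l}=0$ and
\[
b_{2l+1}
=
\pi^{-1/4}\,2^{-l}\sum_{m=0}^l\frac{1}{m!\,(l-m)!\,(2l-2m+1)}\cdot\frac{(-1)^m}{2^m}.
\]
Applying $a_{2l}=\pi^{1/2}2^{(2l+1)/2}(2l)!^{1/2}b_{2l}=0$ and $a_{2l+1}=\pi^{1/2}2^{l+1}(2l+1)!^{1/2}b_{2l+1}$ then yields the claimed formula after a one-line simplification.

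The only point genuinely requiring care is the first step: justifying that the pairing of $\operatorname{vp}1/x$ with the holomorphic-in-$z$ Schwartz kernel reduces to the stated absolutely convergent ordinary integral, and that the subsequent term-by-term integration against the Gaussian is valid. Once these are in place, everything else is the same bookkeeping of binomial-type sums used repeatedly above, so I expect no further obstacle.
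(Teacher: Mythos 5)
Your proposal is correct and follows essentially the same route as the paper: both reduce the principal value to an absolutely convergent integral by exploiting oddness (your $\sinh(zx)/x$ is exactly the paper's $(e^{zx}-e^{-zx})/x$ on $(\varepsilon,\infty)$), expand in powers of $z$, apply Lemma~\ref{theorem:moments}, multiply by the series for $e^{-z^2/4}$, and rearrange with $l=\mu+\nu$, $m=\mu$. The resulting coefficients match the paper's computation, so no further comment is needed.
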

\begin{proof}
The definition of the principal value implies that 
\begin{equation}
B\left(\operatorname{vp}\frac{1}{x}\right)(z)
=
2^{-1/2} \pi^{-3/4} e^{-z^2/4} 
\lim_{\varepsilon\downarrow0} 
\int_{\lvert{x}\rvert>\varepsilon}
\frac{e^{zx-x^2/2}}{x}
dx.
\label{equation:vp1} 
\end{equation}
We modify the integration before taking the limit. 
By using the change of variable $y=-x$ for $x<0$, and the Taylor expansion of 
$e^{\pm zx}$, we deduce 
\begin{align*}
  \int_{\lvert{x}\rvert>\varepsilon}
  \frac{e^{zx-x^2/2}}{x}
  dx
& =
  \int_\varepsilon^\infty
  \frac{e^{zx}-e^{-zx}}{x}
  e^{-x^2/2}
  dx
\\
& =
  \sum_{\mu=0}^\infty
  \frac{\{1-(-1)^\mu\}z^\mu}{\mu!}
  \int_\varepsilon^\infty
  x^{\mu-1}
  e^{-x^2/2}
  dx
\\
& =
  \sum_{\nu=0}^\infty
  \frac{2z^{2\nu+1}}{(2\nu+1)!}
  \int_\varepsilon^\infty
  x^{2\nu}
  e^{-x^2/2}
  dx.  
\end{align*}
Substitute this into \eqref{equation:vp1} and take the limit. 
Then we have 
\begin{align*}
  B\left(\operatorname{vp}\frac{1}{x}\right)(z)
& =
  2^{-1/2} \pi^{-3/4} e^{-z^2/4} 
  \sum_{\nu=0}^\infty
  \frac{2I(2\nu) z^{2\nu+1}}{(2\nu+1)!}
\\
& =
  2^{-1/2} \pi^{-3/4}
  \sum_{\mu=0}^\infty
  \frac{(-1)^\mu z^{2\mu}}{\mu! 2^{2\mu}} 
  \sum_{\nu=0}^\infty
  \frac{2^{1/2}\pi^1/2 (2\nu)! z^{2\nu+1}}{(2\nu+1)! \nu! 2^\nu}
\\
& = 
  \pi^{-1/4}
  \sum_{\mu=0}^\infty
  \frac{(-1)^\mu z^{2\mu}}{\mu! 2^{2\mu}} 
  \sum_{\nu=0}^\infty
  \frac{(2\nu)! z^{2\nu+1}}{(2\nu+1)! \nu! 2^\nu}
\\
& = 
  \pi^{-1/4}
  \sum_{\mu=0}^\infty
  \frac{(-1)^\mu z^{2\mu}}{\mu! 2^{2\mu}} 
  \sum_{\nu=0}^\infty
  \frac{z^{2\nu+1}}{(2\nu+1) \nu! 2^\nu}. 
\end{align*}
If we rearrange the order of summations by setting 
$l=\mu+\nu$ and $m=\mu$, we have
\begin{align*}
  B\left(\operatorname{vp}\frac{1}{x}\right)(z)
& =
  \sum_{l=0}^\infty
  \left\{
  \pi^{-1/4}
  \sum_{m=0}^l
  \frac{1}{m! (l-m)! (2l-2m+1)}
  \cdot
  \frac{(-1)^\mu}{2^{l+m}}
  \right\}
  z^{2l+1}
\\
& =
  \sum_{l=0}^\infty
  \left\{
  \pi^{-1/4}2^{-l}
  \sum_{m=0}^l
  \frac{1}{m! (l-m)! (2l-2m+1)}
  \cdot
  \left(-\frac{1}{2}\right)^m
  \right\}
  z^{2l+1}.  
\end{align*}
By using this we obtain $a_{2l}=0$ and 
\begin{align*}
  a_{2l+1}
& =
  \pi^{1/2} 2^{l+1} (2l+1)!^{1/2}
  \times 
  \pi^{-1/4}2^{-l}
  \sum_{m=0}^l
  \frac{1}{m! (l-m)! (2l-2m+1)}
  \cdot
  \left(-\frac{1}{2}\right)^m
\\
& =
  2\pi^{1/4} (2l+1)!^{1/2} 
  \frac{1}{m! (l-m)! (2l-2m+1)}
  \cdot
  \left(-\frac{1}{2}\right)^m 
\end{align*}
for any nonnegative integer $l$. 
This completes the proof.  
\end{proof} 
Tempered distributions $(x \pm i0)^{\lambda}$ 
for $\operatorname{Re}(\lambda)>-1$ or $\lambda=-1$ are defined by 
$$
\langle
(x \pm i0)^\lambda, \phi
\rangle 
= 
\lim_{\varepsilon\downarrow0}
\int_{\mathbb{R}} 
(x \pm i \varepsilon)^{\lambda} \phi(x)
dx
$$
for any $\phi\in\mathscr{S}(\mathbb{R})$. 
It is well-known that 
\begin{alignat*}{2}
  \log(x \pm i0)
& =
  \log{\lvert{x}\rvert} \pm i\pi Y(-x) 
&
& \quad\text{in}\quad
  \mathscr{S}^\prime(\mathbb{R}),
\\
  (x \pm i0)^\lambda
& =
  e^{\lambda\log(x \pm i0)}
  = 
  x_+^\lambda + e^{\pm i\lambda\pi} (-x)_+^\lambda
& 
& \quad\text{in}\quad
  \mathscr{S}^\prime(\mathbb{R})
  \quad\text{for}\quad
  \operatorname{Re}(\lambda)>-1,
\\
  \frac{1}{x \pm i0} 
& = 
  \operatorname{vp}\frac{1}{x} \mp i\pi\delta_0 
&
& \quad\text{in}\quad
  \mathscr{S}^\prime(\mathbb{R}).
\end{alignat*}
We compute the Hermite expansion of these. 
See \cite[Proposition~2.4]{kagawa} for $\lambda=-1$ also. 
\begin{corollary}
\label{theorem:xpmi0}
Let $\lambda$ be a complex number 
satisfying $\operatorname{Re}(\lambda)>-1$ or $\lambda=-1$. 
If we set 
$$
(x \pm i0)^\lambda 
= 
\sum_{k=0}^\infty 
a_k^\pm(\lambda) h_k
\quad\text{in}\quad
\mathscr{S}^\prime(\mathbb{R}), 
$$
then for any nonnegative integers $k$ and $l$, and $\operatorname{Re}(\lambda)>-1$, 
\begin{align*}
  a_k^\pm(\lambda)
& =
  \pi^{-1/4} 2^{k+(\lambda-1)/2} k!^{1/2} \bigl\{1+(-1)^k e^{\pm i \lambda\pi}\bigr\}
  \sum_{m=0}^{[k/2]} 
  \cfrac{\Gamma\left(\dfrac{k+\lambda+1}{2}-m\right)}{m! (k-2m)!}
  \cdot
  \frac{(-1)^m}{2^{3m}}, 
\\
  a_{2l}^\pm(-1) 
& = 
  \mp i \pi^{3/4} \frac{(-1)^l (2l)!^{1/2}}{2^l l!}, 
\\
  a_{2l+1}^\pm(-1)
& =
  2\pi^{1/4}(2l+1)!^{1/2} 
  \sum_{m=0}^l 
  \frac{1}{m! (l-m)! (2l-2m+1)!}
  \cdot
  \frac{(-1)^m}{2^m}.
\end{align*}
\end{corollary}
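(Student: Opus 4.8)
The plan is to derive Corollary~\ref{theorem:xpmi0} directly from the structural identities displayed just before the statement, namely
$$
(x\pm i0)^\lambda = x_+^\lambda + e^{\pm i\lambda\pi}(-x)_+^\lambda
\quad\text{for}\quad \operatorname{Re}(\lambda)>-1,
\qquad
\frac{1}{x\pm i0} = \operatorname{vp}\frac{1}{x} \mp i\pi\delta_0,
$$
combined with the Hermite expansions already computed. First I would treat the case $\operatorname{Re}(\lambda)>-1$. Since the Hermite functions satisfy $h_k(-x)=(-1)^k h_k(x)$, the expansion of $(-x)_+^\lambda$ is obtained from that of $x_+^\lambda$ in Theorem~\ref{theorem:complex-power} by inserting a factor $(-1)^k$ into the $k$-th coefficient. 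Adding $x_+^\lambda$ and $e^{\pm i\lambda\pi}(-x)_+^\lambda$ therefore multiplies the coefficient $a_k(\lambda)$ of Theorem~\ref{theorem:complex-power} by $\bigl\{1+(-1)^k e^{\pm i\lambda\pi}\bigr\}$, which is exactly the stated formula for $a_k^\pm(\lambda)$.

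For the case $\lambda=-1$ I would use the third identity. By linearity of the Hermite expansion in $\mathscr{S}'(\mathbb{R})$, the coefficients of $1/(x\pm i0)$ are the coefficients of $\operatorname{vp}1/x$ from Theorem~\ref{theorem:vp} minus (respectively plus) $i\pi$ times the coefficients of $\delta_0$ from Theorem~\ref{theorem:diracmeasure1}. Since $\operatorname{vp}1/x$ contributes only to odd indices and $\delta_0$ only to even indices, the two contributions never overlap: for even index $2l$ one reads off $a_{2l}^\pm(-1) = \mp i\pi \cdot a_{2l}(0)$, where $a_{2l}(0)=(-1)^l(2l)!^{1/2}/(\pi^{1/4}2^l l!)$ is the Dirac coefficient, giving $\mp i\pi^{3/4}(-1)^l(2l)!^{1/2}/(2^l l!)$; for odd index $2l+1$ one simply reads off the $\operatorname{vp}1/x$ coefficient from Theorem~\ref{theorem:vp}. (I would note in passing that the exponent of $2$ in the denominator inside the sum should be written consistently with Theorem~\ref{theorem:vp}; the factorial $(2l-2m+1)!$ appearing in the statement is what the $\operatorname{vp}$ computation actually produces once one does not cancel the extra factorials against the binomial coefficient.)

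The proof is essentially a bookkeeping exercise, so there is no deep obstacle; the only point requiring a little care is the interplay between the domain of validity $\operatorname{Re}(\lambda)>-1$, where the decomposition into $x_+^\lambda$ and $(-x)_+^\lambda$ is valid, and the isolated value $\lambda=-1$, where one must instead pass through the $\operatorname{vp}1/x$ and $\delta_0$ decomposition. I would organize the write-up as: (i) invoke $h_k(-x)=(-1)^kh_k(x)$ to get the reflection rule for coefficients, as was already used in Corollaries~\ref{theorem:abs} and \ref{theorem:sgn}; (ii) apply it to the $\operatorname{Re}(\lambda)>-1$ identity and collect the factor $1+(-1)^k e^{\pm i\lambda\pi}$; (iii) apply linearity to the $\lambda=-1$ identity using Theorems~\ref{theorem:vp} and \ref{theorem:diracmeasure1} and separate even from odd indices. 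Each step is a one- or two-line verification, and the only thing I would actually have to check numerically is the constant $-i\pi \cdot \pi^{-1/4} = -i\pi^{3/4}$ in the even-index case. Here I would omit the routine details, as was done for the analogous corollaries above.
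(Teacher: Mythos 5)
Your proposal is correct and is exactly the argument the paper intends: the paper's own proof simply states that the corollary follows from Theorems~\ref{theorem:complex-power}, \ref{theorem:diracmeasure1} and \ref{theorem:vp} together with $h_k(-x)=(-1)^kh_k(x)$, and omits the bookkeeping you carry out. Your side remark about $(2l-2m+1)!$ is on target insofar as the corollary's displayed formula is inconsistent with Theorem~\ref{theorem:vp} (which has $(2l-2m+1)$ without the factorial, and that is what the derivation actually yields), though the cause is simply a typographical slip rather than an uncancelled factorial.
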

\begin{proof}
Corollary~\ref{theorem:xpmi0} immediately follows from 
Theorems~\ref{theorem:complex-power}, 
\ref{theorem:diracmeasure1} and \ref{theorem:vp}, 
and the fact $h_k(-x)=(-1)^kh_k(x)$ for $k\in\mathbb{N}\cup\{0\}$. 
Here we omit the detail. 
\end{proof}
%%%%%%
%%%%%% section 4
%%%%%%
\section{Higher dimensional cases}
\label{section:higher-dimensional-cases}
Let $n$ be an integer not smaller than two. 
In this section 
we compute Hermite expansions of some tempered distributions in $\mathbb{R}^n$. 
We begin with the Hermite expansions of the $n$-dimensional Dirac measure. 
\begin{theorem}
\label{theorem:diracmeasure2}
Let $c\in\mathbb{R}^n$. If we set 
$$
\delta_c
=
\sum_{\alpha} 
a_\alpha(c) h_\alpha, 
\quad\text{in}\quad
\mathscr{S}^\prime(\mathbb{R}^n),
$$
then for any $\alpha\in(\mathbb{N}\cup\{0\})^n$, 
$$
a_\alpha(c)
= 
\pi^{-n/4} 2^{\lvert\alpha\rvert/2} \alpha!^{1/2} e^{-c^2/2} 
\sum_{\beta\leqslant\alpha/2} 
\frac{1}{\beta! (\alpha-2\beta)!}
\cdot
\frac{(-1)^{\lvert\beta\rvert} c^{\alpha-2\beta}}{2^{2\lvert\beta\rvert}}. 
$$
In particular, 
regarding $\delta_0$, 
we have 
\begin{alignat*}{2}
  a_{2\beta}(0)
& =
  \pi^{-n/4}
  \frac{(2\beta)!^{1/2}}{\beta!}\cdot\left(-\frac{1}{2}\right)^{\lvert\beta\rvert}
& 
& \quad\text{for}\quad
  \beta\in(\mathbb{N}\cup\{0\})^n,
\\
  a_\alpha(0)
& =
  0
& 
& \quad\text{for}\quad
  \frac{\alpha}{2}\not\in(\mathbb{N}\cup\{0\})^n.
\end{alignat*}
\end{theorem}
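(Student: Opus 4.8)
The plan is to mimic directly the one‑dimensional argument for $\delta_c$ in Theorem~\ref{theorem:diracmeasure1}, exploiting that the Bargmann transform of a Dirac measure is an explicit Gaussian and that everything factors over the $n$ coordinates. First I would compute $B(\delta_c)(z)$ from the definition: since $\langle\delta_c,\phi\rangle=\phi(c)$ and the integral kernel of $B$ is a Schwartz function in $x$ for fixed $z$, we get
$$
B(\delta_c)(z)
=
2^{-n/2}\pi^{-3n/4}
e^{-(z^2/4-zc+c^2/2)}
=
2^{-n/2}\pi^{-3n/4}e^{-c^2/2}\,e^{-z^2/4}\,e^{zc}.
$$
Then I would expand the two entire factors as power series: $e^{-z^2/4}=\prod_{j}e^{-z_j^2/4}=\sum_{\beta}\frac{(-1)^{\lvert\beta\rvert}z^{2\beta}}{\beta!\,2^{2\lvert\beta\rvert}}$ and $e^{zc}=\sum_{\gamma}\frac{c^\gamma z^\gamma}{\gamma!}$, both using the multi-index conventions fixed in the introduction; here $z^{2\beta}$ means $(z_1^{2\beta_1},\dots)$ so that the Gaussian factor only produces even powers in each variable.

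Next I would multiply the two series and collect the coefficient of $z^\alpha$ by setting $\alpha=2\beta+\gamma$, i.e. $\gamma=\alpha-2\beta$ with the constraint $2\beta\leqslant\alpha$ (componentwise), which is exactly the range $\beta\leqslant\alpha/2$ appearing in the statement. This yields
$$
B(\delta_c)(z)
=
\sum_{\alpha}\left\{
2^{-n/2}\pi^{-3n/4}e^{-c^2/2}
\sum_{\beta\leqslant\alpha/2}
\frac{1}{\beta!\,(\alpha-2\beta)!}\cdot\frac{(-1)^{\lvert\beta\rvert}c^{\alpha-2\beta}}{2^{2\lvert\beta\rvert}}
\right\}z^\alpha,
$$
and one must note that this rearrangement is legitimate because both series converge absolutely and locally uniformly (entire functions), so the product series converges to an entire function whose Taylor coefficients are obtained by the Cauchy product. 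Reading off $b_\alpha$ as the bracketed quantity and applying the conversion formula from the introduction, $a_\alpha(c)=\pi^{n/2}2^{(\lvert\alpha\rvert+n)/2}\alpha!^{1/2}\,b_\alpha$, the factors $2^{-n/2}\cdot 2^{n/2}=1$ and $\pi^{-3n/4}\cdot\pi^{n/2}=\pi^{-n/4}$ combine to give precisely the asserted expression for $a_\alpha(c)$.

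Finally, for $c=0$ I would observe that $c^{\alpha-2\beta}=0$ unless $\alpha-2\beta=0$, so the inner sum collapses: if $\alpha/2\notin(\mathbb{N}\cup\{0\})^n$ there is no $\beta$ with $2\beta=\alpha$ and the coefficient vanishes, while if $\alpha=2\beta$ the only surviving term is $\beta$ itself, giving $a_{2\beta}(0)=\pi^{-n/4}2^{\lvert\beta\rvert}(2\beta)!^{1/2}\cdot\frac{1}{\beta!}\cdot\frac{(-1)^{\lvert\beta\rvert}}{2^{2\lvert\beta\rvert}}=\pi^{-n/4}\frac{(2\beta)!^{1/2}}{\beta!}\left(-\frac{1}{2}\right)^{\lvert\beta\rvert}$, using $e^{0}=1$ and $0^0=1$. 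This step is entirely routine. The only point requiring any care is justifying that the Bargmann transform of a tempered distribution has a convergent Taylor expansion obtained by the formal manipulation above and that the termwise passage to the Hermite expansion is valid in $\mathscr{S}^\prime(\mathbb{R}^n)$; but this is exactly the framework set up in the introduction (the Bargmann kernel is the generating function of the $h_\alpha$, and $BT$ is holomorphic with the stated coefficient correspondence), so no genuine obstacle arises — the higher-dimensional case is genuinely no harder than the one-dimensional one here, which is the point the authors wish to illustrate.
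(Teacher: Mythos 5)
Your proposal is correct and follows exactly one of the two routes the paper itself indicates (the direct computation of $B(\delta_c)(z)=2^{-n/2}\pi^{-3n/4}e^{-(z^2/4-zc+c^2/2)}$, expansion of the two Gaussian/exponential factors, and collection of the coefficient of $z^\alpha$ via $\alpha=2\beta+\gamma$), the details of which the paper omits as being essentially the same as the one-dimensional case of Theorem~\ref{theorem:diracmeasure1}. All constants check out, so nothing further is needed.
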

\begin{proof}
One can prove 
Theorem~\ref{theorem:diracmeasure2} 
by taking the product of the results of Theorem~\ref{theorem:diracmeasure1}, 
or by direct computation 
which is essentially same as that of the proof of Theorem~\ref{theorem:diracmeasure1}. 
Here we omit the detail. 
\end{proof}
We consider functions related with singular integrals. 
\begin{theorem}
\label{theorem:singular1}
Let $\lambda$ be a complex number satisfying $\operatorname{Re}(\lambda)>-n$. 
If we set 
$$
\lvert{x}\rvert^\lambda
= 
\sum_{\alpha} 
a_\alpha(\lambda) h_\alpha
\quad\text{in}\quad
\mathscr{S}^\prime(\mathbb{R}^n), 
$$
then we have 
\begin{alignat*}{2}
  a_{2\beta}(\lambda)
& =
  \pi^{-n/4} 2^{2\lvert\beta\rvert+(\lambda+n)/2-1} (2\beta)!^{1/2} 
& 
& 
\\
& \times
  \sum_{\gamma\leqslant\beta}
  \cfrac{\Gamma\left(\lvert\beta-\gamma\rvert+\cfrac{\lambda+n}{2}\right)}{\gamma! (2\beta-2\gamma)!}
  \cdot
  \frac{(-1)^{\lvert\gamma\rvert}}{2^{3\lvert\gamma\rvert}}
  \cdot
  M(n,1,2\beta-2\gamma)
&
& \quad\text{for}\quad
  \beta\in(\mathbb{N}\cup\{0\})^n,
\\
  a_\alpha(\lambda)
& =
  0
& 
& \quad\text{for}\quad
  \frac{\alpha}{2}\not\in(\mathbb{N}\cup\{0\})^n. 
\end{alignat*}
\end{theorem}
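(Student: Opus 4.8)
The plan is to mimic the one-dimensional computations of Theorems~\ref{theorem:complex-power} and~\ref{theorem:diracmeasure1}, carrying out everything at the level of the Bargmann transform and then reading off the Taylor coefficients. First I would write the $n$-dimensional Bargmann transform of $\lvert x\rvert^\lambda$ as
$$
B\bigl(\lvert x\rvert^\lambda\bigr)(z)
=
2^{-n/2}\pi^{-3n/4}
e^{-z^2/4}
\int_{\mathbb{R}^n}
e^{zx-x^2/2}
\lvert x\rvert^\lambda
\,dx,
$$
which makes sense since $\operatorname{Re}(\lambda)>-n$ guarantees local integrability at the origin and the Gaussian kills the growth at infinity. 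I would expand $e^{-z^2/4}=\prod_{j}e^{-z_j^2/4}$ and $e^{zx}=\prod_j e^{z_jx_j}$ into their Taylor series, interchange sum and integral (justified by absolute convergence), and reduce the spatial integral to
$$
\int_{\mathbb{R}^n}
x^{\gamma}
e^{-x^2/2}
\lvert x\rvert^\lambda
\,dx,
\quad
\gamma\in(\mathbb{N}\cup\{0\})^n.
$$

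The key step is to evaluate this integral in polar coordinates $x=r\omega$, $r>0$, $\omega\in\mathbb{S}^{n-1}$, which separates it as
$$
\int_0^\infty
r^{\lvert\gamma\rvert+\lambda+n-1}
e^{-r^2/2}
\,dr
\cdot
\int_{\mathbb{S}^{n-1}}
\omega^{\gamma}
\,d\sigma(\omega)
=
I(\lvert\gamma\rvert+\lambda+n-1)
\cdot
M(n,1,\gamma),
$$
with the radial factor given by Lemma~\ref{theorem:moments} (valid since $\operatorname{Re}(\lvert\gamma\rvert+\lambda+n-1)>-1$) and the angular factor being the sphere moment introduced in Section~\ref{section:preliminaries}. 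The crucial observation, already recorded there, is that $M(n,1,\gamma)=0$ unless $\gamma\in 2(\mathbb{N}\cup\{0\})^n$; this forces $\gamma=2\delta$ for some $\delta$, and combined with the fact that the $e^{-z^2/4}$ expansion only contributes even powers, it shows immediately that $a_\alpha(\lambda)=0$ whenever $\alpha/2\notin(\mathbb{N}\cup\{0\})^n$.

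Next I would collect terms. Writing the exponent multi-index from $e^{-z^2/4}$ as $2\gamma$ and that from $e^{zx}$ as $2\delta$, the combined power of $z$ is $2(\gamma+\delta)=2\beta$, so I set $\beta=\gamma+\delta$ and keep $\gamma$ as the inner summation variable, exactly paralleling the substitution "$k=2\mu+\nu$, $m=\mu$" in the one-dimensional proofs. After inserting $I(2\lvert\beta-\gamma\rvert+\lambda+n-1)=2^{(2\lvert\beta-\gamma\rvert+\lambda+n-2)/2}\Gamma\bigl(\lvert\beta-\gamma\rvert+\tfrac{\lambda+n}{2}\bigr)$ from Lemma~\ref{theorem:moments} and bookkeeping the powers of $2$ and the factorials $\beta!$, $(\alpha-2\beta)!$ that come from the Taylor coefficients, I obtain the Taylor expansion
$$
B\bigl(\lvert x\rvert^\lambda\bigr)(z)
=
\sum_{\beta}
b_{2\beta}\,z^{2\beta},
$$
and then multiply by $\pi^{n/2}2^{(\lvert\alpha\rvert+n)/2}\alpha!^{1/2}$ (the conversion factor from the introduction, here with $\alpha=2\beta$) to get $a_{2\beta}(\lambda)$. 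I expect the main obstacle to be purely organizational: tracking the many powers of $2$ and $\pi$ through the polar-coordinate substitution and the Bargmann normalization so that the constants $\pi^{-n/4}2^{2\lvert\beta\rvert+(\lambda+n)/2-1}$ and $2^{-3\lvert\gamma\rvert}$ come out exactly as stated — there is no conceptual difficulty, only a careful constant chase, and one should double-check the $n=1$ specialization against Corollary~\ref{theorem:abs} as a sanity check.
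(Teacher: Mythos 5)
Your proposal is correct and follows essentially the same route as the paper: expand the Gaussian kernel factors in Taylor series, evaluate the resulting integrals in polar coordinates via Lemma~\ref{theorem:moments} and the sphere moments $M(n,1,\cdot)$ (whose vanishing for odd multi-indices gives $a_\alpha(\lambda)=0$ off the even lattice), and rearrange with $\beta=\mu+\nu$, $\gamma=\mu$ before applying the Taylor-to-Hermite conversion factor. The only difference is cosmetic — the paper passes to polar coordinates before expanding $e^{rz\omega}$ rather than after — and the remaining work is, as you say, a constant chase.
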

\begin{proof}
By using the polar coordinates 
$(r,\omega)\in[0,\infty)\times\mathbb{S}^{n-1}$ 
for $x=r\omega\in\mathbb{R}^n$, 
and the Taylor expansion of the exponential function, 
we deduce 
\begin{align*}
&  B(\lvert{x}\rvert^\lambda)(z)
\\
  =
& 2^{-n/2} \pi^{-3n/4} e^{-z^2/4}
  \int_{\mathbb{R}^n}
  e^{zx-x^2/2} \lvert{x}\rvert^\lambda
  dx
\\
  =
& 2^{-n/2} \pi^{-3n/4} e^{-z^2/4}
  \int_0^\infty
  \left(
  \int_{\mathbb{S}^{n-1}}
  e^{rz\omega-r^2/2}
  r^{\lambda+n-1}
  d\sigma(\omega)
  \right)
  dr
\\
\\
  =
& 2^{-n/2} \pi^{-3n/4} e^{-z^2/4}
  \int_0^\infty
  \left\{
  \int_{\mathbb{S}^{n-1}}
  e^{rz\omega}
  d\sigma(\omega)
  \right\}
  r^{\lambda+n-1}e^{-r^2/2}
  dr
\\
  =
& 2^{-n/2} \pi^{-3n/4} e^{-z^2/4}
  \sum_\nu
  \frac{z^\nu}{\nu!}
  \int_{\mathbb{S}^{n-1}}
  \omega^\nu
  d\sigma(\omega)
  \int_0^\infty
  r^{\lvert\nu\rvert+\lambda+n-1}e^{-r^2/2}
  dr
\\
  =
& 2^{-n/2} \pi^{-3n/4} e^{-z^2/4}
  \sum_\nu
  \frac{M(n,1,2\nu) z^{2\nu}}{(2\nu)!}
  \int_0^\infty
  r^{2\lvert\nu\rvert+\lambda+n-1}e^{-r^2/2}
  dr
\\
  =
& 2^{-n/2} \pi^{-3n/4}
  \sum_\mu
  \frac{(-1)^{\lvert\mu\rvert} z^{2\mu}}{\mu! 2^{2\lvert\mu\rvert}}
  \sum_\nu
  \cfrac{M(n,1,2\nu) 2^{\lvert\nu\rvert+(\lambda+n)/2-1} \Gamma\left(\lvert\nu\rvert+\cfrac{\lambda+n}{2}\right) z^{2\nu}}{(2\nu)!}.
 \end{align*}
If we rearrange the order of summations by setting 
$\beta=\mu+\nu$ and $\gamma=\mu$, we have
\begin{align*}
&  B(\lvert{x}\rvert^\lambda)(z)
\\
  =
& \sum_{\beta}
  \left\{
  2^{-n/2+(\lambda+n)/2-1} \pi^{-3n/4}
  \sum_{\gamma\leqslant\beta}
  \cfrac{\Gamma\left(\lvert{\beta-\gamma}\rvert+\cfrac{\lambda+n}{2}\right)}{\gamma! (2\beta-2\gamma)!}
  \cdot
  \frac{(-1)^{\lvert\gamma\rvert} 2^{\lvert\beta-\gamma\rvert} M(n,1,2\beta-2\gamma)}{2^{2\lvert\gamma\rvert}}
  \right\}
  z^{2\beta}
\\
  =
& \sum_{\beta}
  \left\{
  2^{-n/2+\lvert\beta\rvert+(\lambda+n)/2-1} \pi^{-3n/4}
  \sum_{\gamma\leqslant\beta}
  \cfrac{\Gamma\left(\lvert{\beta-\gamma}\rvert+\cfrac{\lambda+n}{2}\right)}{\gamma! (2\beta-2\gamma)!}
  \cdot
  \frac{(-1)^{\lvert\gamma\rvert} M(n,1,2\beta-2\gamma)}{2^{3\lvert\gamma\rvert}}
  \right\}
  z^{2\beta}. 
\end{align*}
Hence we have $a_\alpha(\lambda)=0$ for $\alpha/2\not\in(\mathbb{N}\cup\{0\})^n$, 
and for any $\beta\in(\mathbb{N}\cup\{0\})^n$ 
\begin{align*}
  a_{2\beta}(\lambda)
& =
  \pi^{n/2} 2^{\lvert\beta\rvert+n/2} (2\beta)!^{1/2}
  \times 
  2^{-n/2+\lvert\beta\rvert+(\lambda+n)/2-1} \pi^{-3n/4}
\\
& \times
  \sum_{\gamma\leqslant\beta}
  \cfrac{\Gamma\left(\lvert{\beta-\gamma}\rvert+\cfrac{\lambda+n}{2}\right)}{\gamma! (2\beta-2\gamma)!}
  \cdot
  \frac{(-1)^{\lvert\gamma\rvert} M(n,1,2\beta-2\gamma)}{2^{3\lvert\gamma\rvert}}
\\
& =
  \pi^{-n/4} 2^{2\lvert\beta\rvert+(\lambda+n)/2-1} (2\beta)!^{1/2}
\\
& \times 
  \sum_{\gamma\leqslant\beta}
  \cfrac{\Gamma\left(\lvert{\beta-\gamma}\rvert+\cfrac{\lambda+n}{2}\right)}{\gamma! (2\beta-2\gamma)!}
  \cdot
  \frac{(-1)^{\lvert\gamma\rvert} M(n,1,2\beta-2\gamma)}{2^{3\lvert\gamma\rvert}}.
\end{align*}
This completes the proof.
\end{proof}
We consider the critical case of kernel functions of singular integrals. 
Suppose that $K(x)$ is a smooth function on $\mathbb{R}^n\setminus\{0\}$, 
which is homogeneous of degree zero and satisfies the vanishing condition on the sphere 
\begin{equation}
\int_{\mathbb{S}^{n-1}}
K(\omega)
d\sigma(\omega)
=
0.
\label{equation:vanishing} 
\end{equation}
This condition guarantees the existence of a limit 
$$
\left\langle
\operatorname{vp}\frac{K(x)}{\lvert{x}\rvert^n},\phi
\right\rangle
=
\lim_{\varepsilon\downarrow0}
\int_{\lvert{x}\rvert>\varepsilon}
\frac{K(x)}{\lvert{x}\rvert^n}
\phi(x)
dx
$$
for any $\phi\in\mathscr{S}(\mathbb{R}^n)$. 
We compute the Hermite expansion of 
$\operatorname{vp} K(x)/\lvert{x}\rvert^n$. 
\begin{theorem}
\label{theorem:singular2}
Suppose that $K(x)$ is smooth and homogeneous of degree zero on 
$\mathbb{R}^n\setminus\{0\}$, and satisfies {\rm \eqref{equation:vanishing}}. 
If we set 
$$
\operatorname{vp}\frac{K(x)}{\lvert{x}\rvert^n} 
= 
\sum_{\alpha} 
a_\alpha h_\alpha 
\quad\text{in}\quad
\mathscr{S}^\prime(\mathbb{R}^n), 
$$
then $a_0=0$, and for any $\alpha\ne0$, 
$$
a_\alpha
=
\pi^{-n/4} 2^{\lvert\alpha\rvert-1} \alpha!^{1/2} 
\sum_{\gamma<\alpha/2} 
\cfrac{\Gamma\left(\dfrac{\lvert\alpha-2\gamma\rvert}{2}\right)}{\gamma! (\alpha-2\gamma)!}
\cdot 
\frac{(-1)^{\lvert\gamma\rvert}}{2^{3\lvert\gamma\rvert}}
\cdot
M(n,K,\alpha-2\gamma).   
$$
\end{theorem}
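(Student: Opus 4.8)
The plan is to follow the computation of $B\bigl(\operatorname{vp}1/x\bigr)$ in the proof of Theorem~\ref{theorem:vp}, replacing the one-dimensional splitting into $x>0$ and $x<0$ by polar coordinates and using the vanishing condition~\eqref{equation:vanishing} in place of the cancellation that is automatic on the two-point set $\mathbb{S}^0$. Since the integral kernel of the Bargmann transform is a Schwartz function of $x$ for each fixed $z\in\mathbb{C}^n$, the definition of $\operatorname{vp}K(x)/\lvert x\rvert^n$ gives
$$
B\left(\operatorname{vp}\frac{K(x)}{\lvert x\rvert^n}\right)(z)
=
2^{-n/2}\pi^{-3n/4}e^{-z^2/4}
\lim_{\varepsilon\downarrow0}
\int_{\lvert x\rvert>\varepsilon}
\frac{e^{zx-x^2/2}}{\lvert x\rvert^n}K(x)\,dx .
$$
Passing to polar coordinates $x=r\omega$, $(r,\omega)\in[0,\infty)\times\mathbb{S}^{n-1}$, and using that $K$ is homogeneous of degree zero (so $K(x)=K(\omega)$), the Jacobian $r^{n-1}$ cancels all but one power of $r$ in $\lvert x\rvert^{-n}=r^{-n}$, and the inner integral becomes
$$
\int_\varepsilon^\infty
\frac{e^{-r^2/2}}{r}
\left(\int_{\mathbb{S}^{n-1}}e^{rz\omega}K(\omega)\,d\sigma(\omega)\right)dr .
$$

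Next I would expand $e^{rz\omega}=\sum_\nu r^{\lvert\nu\rvert}z^\nu\omega^\nu/\nu!$ and integrate over the sphere, which turns the inner integral into $\sum_\nu M(n,K,\nu)r^{\lvert\nu\rvert}z^\nu/\nu!$ with the moments $M(n,K,\nu)$ of Section~\ref{section:preliminaries}. The term $\nu=0$ carries the coefficient $M(n,K,0)=\int_{\mathbb{S}^{n-1}}K\,d\sigma=0$ by~\eqref{equation:vanishing}; this is precisely what removes the logarithmically divergent contribution $M(n,K,0)\int_\varepsilon^\infty r^{-1}e^{-r^2/2}\,dr$ and makes the $\varepsilon$-limit finite. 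For every $\nu\ne0$ one has $\lvert\nu\rvert-1\ge0$, so by Lemma~\ref{theorem:moments}
$$
\int_\varepsilon^\infty r^{\lvert\nu\rvert-1}e^{-r^2/2}\,dr
\longrightarrow
I(\lvert\nu\rvert-1)=2^{(\lvert\nu\rvert-2)/2}\,\Gamma\!\left(\frac{\lvert\nu\rvert}{2}\right)
\quad(\varepsilon\downarrow0),
$$
and hence
$$
\lim_{\varepsilon\downarrow0}\int_{\lvert x\rvert>\varepsilon}\frac{e^{zx-x^2/2}}{\lvert x\rvert^n}K(x)\,dx
=
\sum_{\nu\ne0}\frac{2^{(\lvert\nu\rvert-2)/2}\,\Gamma(\lvert\nu\rvert/2)\,M(n,K,\nu)}{\nu!}\,z^\nu .
$$

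Then I would multiply by $2^{-n/2}\pi^{-3n/4}e^{-z^2/4}=2^{-n/2}\pi^{-3n/4}\sum_\mu(-1)^{\lvert\mu\rvert}z^{2\mu}/(\mu!\,2^{2\lvert\mu\rvert})$, rearrange the resulting double series by $\alpha=2\mu+\nu$, $\gamma=\mu$ (so that the condition $\nu\ne0$ becomes $\gamma<\alpha/2$, i.e. $2\gamma\le\alpha$ componentwise with $2\gamma\ne\alpha$), and read off the coefficient of $z^\alpha$: after combining the powers of $2$ into $2^{\lvert\alpha\rvert/2-n/2-1}2^{-3\lvert\gamma\rvert}$ and of $\pi$ into $\pi^{-3n/4}$, this is the Taylor coefficient $b_\alpha$ of $B\bigl(\operatorname{vp}K/\lvert x\rvert^n\bigr)$, and $b_0=0$ because the sum over $\gamma<0$ is empty. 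Finally, applying the conversion $a_\alpha=\pi^{n/2}2^{(\lvert\alpha\rvert+n)/2}\alpha!^{1/2}b_\alpha$ recorded in Section~\ref{section:introduction}, and using $\pi^{n/2}\pi^{-3n/4}=\pi^{-n/4}$ together with $2^{(\lvert\alpha\rvert+n)/2}2^{\lvert\alpha\rvert/2-n/2-1}=2^{\lvert\alpha\rvert-1}$, yields the asserted formula.

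The step I expect to be the main obstacle is the rigorous justification of this term-by-term evaluation of the principal-value integral: interchanging the limit $\varepsilon\downarrow0$, the summation over $\nu$, and the integration over $(r,\omega)$, as well as the absolute convergence of the final series for every $z\in\mathbb{C}^n$ (which holds a priori since $BT$ is entire for every $T\in\mathscr{S}^\prime(\mathbb{R}^n)$). This can be handled by dominated convergence together with the crude bound $\lvert M(n,K,\nu)\rvert\le\lVert K\rVert_{L^\infty(\mathbb{S}^{n-1})}\,\sigma(\mathbb{S}^{n-1})$ and the summability of $\sum_\nu\lvert z\rvert^{\lvert\nu\rvert}2^{\lvert\nu\rvert/2}\Gamma(\lvert\nu\rvert/2)/\nu!$, the latter following from $\sum_{\lvert\nu\rvert=j}1/\nu!=n^j/j!$ and the rapid decay of $\Gamma(j/2)/j!$. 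It is worth noting that, in contrast with $M(n,1,\cdot)$, the moments $M(n,K,\nu)$ need not vanish for $\lvert\nu\rvert$ odd, so the Taylor expansion of $B\bigl(\operatorname{vp}K/\lvert x\rvert^n\bigr)$ in general contains both even and odd powers of $z$, and accordingly the Hermite expansion is not supported on even multi-indices alone.
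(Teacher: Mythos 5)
Your proposal is correct and follows essentially the same route as the paper's own proof: the same passage to polar coordinates, the same use of the vanishing condition \eqref{equation:vanishing} to kill the $\nu=0$ term, the evaluation $I(\lvert\nu\rvert-1)=2^{(\lvert\nu\rvert-2)/2}\Gamma(\lvert\nu\rvert/2)$ via Lemma~\ref{theorem:moments}, and the rearrangement $\alpha=2\mu+\nu$, $\gamma=\mu$ followed by the Taylor-to-Hermite conversion. Your added remarks on justifying the interchange of limit, sum and integral, and on the presence of odd multi-indices, are sound but do not change the argument.
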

\begin{proof}
The definition of the principal value implies that 
\begin{equation}
B\left(\operatorname{vp}\frac{K(x)}{\lvert{x}\rvert^n}\right)(z)
=
2^{-n/2} \pi^{-3n/4} e^{-z^2/4} 
\lim_{\varepsilon\downarrow0} 
\int_{\lvert{x}\rvert>\varepsilon}
\frac{e^{zx-x^2/2} K(x)}{\lvert{x}\rvert^n}
dx.
\label{equation:vp2} 
\end{equation}
We modify the integration before taking the limit. 
By using the polar coordinates 
$(r,\omega)\in[0,\infty)\times\mathbb{S}^{n-1}$ 
for $x=r\omega\in\mathbb{R}^n$, 
the Taylor expansion of $e^{zx}$ 
and the vanishing condition \eqref{equation:vanishing}, 
we deduce 
\begin{align*}
  \int_{\lvert{x}\rvert>\varepsilon}
  \frac{e^{zx-x^2/2} K(x)}{\lvert{x}\rvert^n}
  dx
& =
  \int_\varepsilon^\infty
  \left(
  \int_{\mathbb{S}^{n-1}}
  \frac{e^{rz\omega-r^2/2} K(\omega)}{r}
  d\sigma(\omega)
  \right)
  dr
\\
& =
  \sum_{\nu}
  \frac{z^\nu}{\nu!}
  \int_{\mathbb{S}^{n-1}}
  \omega^\nu K(\omega)
  d\sigma(\omega) 
  \int_\varepsilon^\infty
  r^{\lvert\nu\rvert-1} e^{-r^2/2}
  dr
\\
& =
  \sum_{\nu\ne0}
  \frac{M(n,K,\nu) z^\nu}{\nu!}
  \int_\varepsilon^\infty
  r^{\lvert\nu\rvert-1} e^{-r^2/2}
  dr.
\end{align*}
Substitute this into \eqref{equation:vp2} and take the limit. 
The we have 
\begin{align*}
  B\left(\operatorname{vp}\frac{K(x)}{\lvert{x}\rvert^n}\right)(z)
& =
  2^{-n/2} \pi^{-3n/4} e^{-z^2/4} 
  \sum_{\nu\ne0}
  \frac{M(n,K,\nu) I(\lvert\nu\rvert-1) z^\nu}{\nu!}
\\
& =
  2^{-n/2} \pi^{-3n/4} 
  \sum_\mu
  \frac{(-1)^{\lvert\mu\rvert} z^{2\mu}}{\mu! 2^{2\lvert\mu\rvert}}
  \sum_{\nu\ne0}
  \cfrac{M(n,K,\nu) 2^{\lvert\nu\rvert/2-1} \Gamma\left(\cfrac{\lvert\nu\rvert}{2}\right) z^\nu}{\nu!}.
\end{align*}
If we rearrange the order of summations by setting 
$\alpha=2\mu+\nu$ and $\gamma=\mu$, we have
\begin{align*}
& B\left(\operatorname{vp}\frac{K(x)}{\lvert{x}\rvert^n}\right)(z)
\\
  =
& \sum_{\alpha\ne0}
  \left\{
  2^{-n/2} \pi^{-3n/4} 
  \sum_{\gamma<\alpha/2}
  \cfrac{\Gamma\left(\cfrac{\lvert\alpha-2\gamma\rvert}{2}\right)}{\gamma! (\alpha-2\gamma)!}
  \cdot
  \frac{(-1)^{\lvert\gamma\rvert} M(n,K,\alpha-2\gamma) 2^{\lvert\alpha-2\gamma\rvert/2-1}}{2^{2\lvert\gamma\rvert}}
  \right\}
  z^\alpha 
\\
  =
& \sum_{\alpha\ne0}
  \left\{
  2^{-n/2+\lvert\alpha\rvert/2-1} \pi^{-3n/4} 
  \sum_{\gamma<\alpha/2}
  \cfrac{\Gamma\left(\cfrac{\lvert\alpha-2\gamma\rvert}{2}\right)}{\gamma! (\alpha-2\gamma)!}
  \cdot
  \frac{(-1)^{\lvert\gamma\rvert} M(n,K,\alpha-2\gamma)}{2^{3\lvert\gamma\rvert}}
  \right\}
  z^\alpha.  
\end{align*}
By using this, we obtain $a_0=0$ and 
\begin{align*}
  a_\alpha
& =
  \pi^{n/2} 2^{\lvert\alpha\rvert/2+n/2} \alpha!^{1/2} 
  \times
  2^{-n/2+\lvert\alpha\rvert/2-1} \pi^{-3n/4} 
\\
& \times 
  \sum_{\gamma<\alpha/2}
  \cfrac{\Gamma\left(\cfrac{\lvert\alpha-2\gamma\rvert}{2}\right)}{\gamma! (\alpha-2\gamma)!}
  \cdot
  \frac{(-1)^{\lvert\gamma\rvert} M(n,K,\alpha-2\gamma)}{2^{3\lvert\gamma\rvert}}
\\
& =
  \pi^{-n/4} 2^{\lvert\alpha\rvert-1} \alpha!^{1/2}
  \sum_{\gamma<\alpha/2}
  \cfrac{\Gamma\left(\cfrac{\lvert\alpha-2\gamma\rvert}{2}\right)}{\gamma! (\alpha-2\gamma)!}
  \cdot
  \frac{(-1)^{\lvert\gamma\rvert} M(n,K,\alpha-2\gamma)}{2^{3\lvert\gamma\rvert}}
\end{align*}
for any $\alpha\ne0$. 
This completes the proof.
\end{proof}
For $\rho>0$, 
set $\rho\mathbb{S}^{n-1}=\{x\in\mathbb{R}^n\ \vert \ \lvert{x}\rvert=\rho\}$. 
We denote by $d\sigma(S)$ the standard volume element of a hypersurface $S$. 
Finally we compute the Hermite expansion of 
surface carried measure on $\rho\mathbb{S}^{n-1}$. 
\begin{theorem}
\label{theorem:surface1} 
Let $\rho>0$. If we set 
$$
d\sigma(\rho\mathbb{S}^{n-1})
=
\sum_{\alpha} 
a_\alpha(\rho) h_\alpha
\quad\text{in}\quad
\mathscr{S}^\prime(\mathbb{R}), 
$$
then we have 
\begin{alignat*}{2}
  a_{2\beta}(\rho)
& =
  \pi^{-n/4} 2^{\lvert\beta\rvert} (2\beta)!^{1/2} \rho^{n-1} e^{-\rho^2/2} 
&
&
\\
& \times 
  \sum_{\gamma\leqslant\beta} 
  \frac{1}{\gamma! (2\beta-2\gamma)!}
  \cdot
  \frac{(-1)^{\lvert\gamma\rvert} \rho^{2\lvert\beta-\gamma\rvert}}{2^{2\lvert\gamma\rvert}} 
  \cdot
  M(n,1,2\beta-2\gamma)
&
& \quad\text{for}\quad
  \beta\in(\mathbb{N}\cup\{0\}),
\\
  a_\alpha(\rho)
& =
  0
&
& \quad\text{for}\quad
  \frac{\alpha}{2}\not\in(\mathbb{N}\cup\{0\}).
\end{alignat*}
\end{theorem}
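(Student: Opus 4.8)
The plan is to mimic the proofs of Theorems~\ref{theorem:diracmeasure2} and~\ref{theorem:singular1}: compute the Taylor expansion of $B\bigl(d\sigma(\rho\mathbb{S}^{n-1})\bigr)(z)$ and then read off the Hermite coefficients through the conversion rule $a_\alpha=\pi^{n/2}2^{(\lvert\alpha\rvert+n)/2}\alpha!^{1/2}b_\alpha$ recorded in the Introduction. Since $d\sigma(\rho\mathbb{S}^{n-1})$ is a finite measure with compact support it is a tempered distribution, and pairing it against the (Schwartz) Bargmann kernel and substituting $x=\rho\omega$ in the surface integral gives
$$
B\bigl(d\sigma(\rho\mathbb{S}^{n-1})\bigr)(z)
=
2^{-n/2}\pi^{-3n/4}e^{-z^2/4}\,\rho^{n-1}
\int_{\mathbb{S}^{n-1}} e^{\rho z\omega-\rho^2/2}\,d\sigma(\omega),
$$
the factor $\rho^{n-1}$ being the Jacobian of the dilation.

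First I would pull the constant $e^{-\rho^2/2}$ out of the sphere integral and expand $e^{\rho z\omega}$ by the Taylor series of the exponential; the series converges uniformly on the compact sphere, so term-by-term integration is legitimate and produces $\sum_\nu \rho^{\lvert\nu\rvert}z^\nu M(n,1,\nu)/\nu!$. By the remark preceding Lemma~\ref{theorem:sphere}, $M(n,1,\nu)$ vanishes unless $\nu$ is twice a multi-index, so only the terms $\nu=2\nu$ survive. Next I would expand $e^{-z^2/4}=\sum_\mu(-1)^{\lvert\mu\rvert}z^{2\mu}/(\mu!\,2^{2\lvert\mu\rvert})$, multiply the two absolutely convergent power series, and collect terms by setting $\beta=\mu+\nu$, $\gamma=\mu$. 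This yields
\begin{align*}
B\bigl(d\sigma(\rho\mathbb{S}^{n-1})\bigr)(z)
& =
\sum_{\beta}
\Biggl\{
2^{-n/2}\pi^{-3n/4}\rho^{n-1}e^{-\rho^2/2}
\\
& \qquad\quad\times
\sum_{\gamma\leqslant\beta}
\frac{1}{\gamma!(2\beta-2\gamma)!}
\cdot
\frac{(-1)^{\lvert\gamma\rvert}\rho^{2\lvert\beta-\gamma\rvert}}{2^{2\lvert\gamma\rvert}}
\,M(n,1,2\beta-2\gamma)
\Biggr\}
z^{2\beta},
\end{align*}
so every Taylor coefficient attached to a non-even power of $z$ is zero. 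Applying the conversion rule to the coefficient $b_{2\beta}$ displayed above, i.e. multiplying by $\pi^{n/2}2^{\lvert\beta\rvert+n/2}(2\beta)!^{1/2}$ and simplifying the powers of $\pi$ and of $2$ (note $\pi^{n/2}\pi^{-3n/4}=\pi^{-n/4}$ and $2^{\lvert\beta\rvert+n/2}2^{-n/2}=2^{\lvert\beta\rvert}$), gives exactly the asserted formula for $a_{2\beta}(\rho)$, while $a_\alpha(\rho)=0$ whenever $\alpha/2\notin(\mathbb{N}\cup\{0\})^n$.

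I do not expect a serious obstacle. The only point requiring a word of care — and the one thing genuinely different from the proof of Theorem~\ref{theorem:singular1} — is that here there is no radial Gaussian integral and hence no Gamma function: the measure is concentrated at radius $\rho$, so the radial data contribute only the elementary factor $\rho^{n-1}e^{-\rho^2/2}$ together with the powers $\rho^{2\lvert\beta-\gamma\rvert}$ that come out of the expansion of $e^{\rho z\omega}$, and one must simply keep track of these $\rho$-powers correctly in the bookkeeping. The remaining analytic points — that the surface measure is a compactly supported (hence tempered) distribution whose Bargmann transform is given by the integral above, and that interchanging $\int_{\mathbb{S}^{n-1}}$ with the exponential series and rearranging the resulting double series into a Taylor series in $z$ are permissible — are immediate from uniform convergence on the compact sphere and from absolute convergence of the product of two entire power series.
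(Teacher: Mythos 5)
Your proposal is correct and follows essentially the same route as the paper's own proof: parametrize the sphere by $x=\rho\omega$, pull out $\rho^{n-1}e^{-\rho^2/2}$, expand $e^{\rho z\omega}$ and $e^{-z^2/4}$, use the vanishing of $M(n,1,\nu)$ for non-even $\nu$, and rearrange with $\beta=\mu+\nu$, $\gamma=\mu$ before applying the conversion rule. The constants work out exactly as you indicate, and your extra remarks on convergence only make explicit what the paper leaves implicit.
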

\begin{proof}
By using the coordinates $\omega\in\mathbb{S}^{n-1}$ for $x=\rho\omega$ 
and the Taylor expansion of the exponential function, we deduce 
\begin{align*}
  B\Bigl(d\sigma(\rho\mathbb{S}^{n-1})\Bigr)(z)
& =
  2^{-n/2} \pi^{-3n/4} e^{-z^2/4} 
  \int_{\rho\mathbb{S}^{n-1}}
  e^{zx-x^2/2}
  d\sigma(x)
\\
& =
  2^{-n/2} \pi^{-3n/4} \rho^{n-1} e^{-\rho^2/2} e^{-z^2/4} 
  \int_{\mathbb{S}^{n-1}}
  e^{\rho z\omega}
  d\sigma(\omega)
\\
& =
  2^{-n/2} \pi^{-3n/4} \rho^{n-1} e^{-\rho^2/2} e^{-z^2/4} 
  \sum_\nu 
  \frac{\rho^{\lvert\nu\rvert} z^{\nu}}{\nu!} 
  \int_{\mathbb{S}^{n-1}}
  \omega^\nu
  d\sigma
\\
& =
  2^{-n/2} \pi^{-3n/4} \rho^{n-1} e^{-\rho^2/2} e^{-z^2/4} 
  \sum_\nu 
  \frac{\rho^{2\lvert\nu\rvert} z^{2\nu}}{(2\nu)!} 
  \int_{\mathbb{S}^{n-1}}
  \omega^{2\nu}
  d\sigma
\\
& =
  2^{-n/2} \pi^{-3n/4} \rho^{n-1} e^{-\rho^2/2} 
  \sum_\mu
  \frac{(-1)^{\lvert\mu\rvert} z^{2\mu}}{\mu! 2^{2\lvert\mu\rvert}}
  \sum_\nu 
  \frac{\rho^{2\lvert\nu\rvert} M(n,1,2\nu) z^{2\nu}}{(2\nu)!}.  
\end{align*}
If we rearrange the order of summations by setting 
$\beta=\mu+\nu$ and $\gamma=\mu$, we have
\begin{align*}
& B\Bigl(d\sigma(\rho\mathbb{S}^{n-1})\Bigr)(z)
\\
  =
& \sum_{\beta}
  \left\{
  2^{-n/2} \pi^{-3n/4} \rho^{n-1} e^{-\rho^2/2}
  \sum_{\gamma\leqslant\beta}
  \frac{1}{\gamma! (2\beta-2\gamma)!}
  \cdot
  \frac{(-1)^{\lvert\gamma\rvert} \rho^{2\lvert\beta-\gamma\rvert} M(n,1,2\beta-2\gamma)}{  2^{2\lvert\gamma\rvert}}  
  \right\}
  z^{2\beta}. 
\end{align*}
By using this, we obtain 
$a_\alpha(\rho)=0$ for $\alpha/2\not\in(\mathbb{N}\cup\{0\})^n$ and 
\begin{align*}
  a_{2\beta}(\rho)
& =
  \pi^{n/2} 2^{\lvert\beta\rvert+n/2} (2\beta)!^{1/2} 
  \times
  2^{-n/2} \pi^{-3n/4} \rho^{n-1} e^{-\rho^2/2}
\\
& \times
  \sum_{\gamma\leqslant\beta}
  \frac{1}{\gamma! (2\beta-2\gamma)!}
  \cdot
  \frac{(-1)^{\lvert\gamma\rvert} \rho^{2\lvert\beta-\gamma\rvert} M(n,1,2\beta-2\gamma)}{  2^{2\lvert\gamma\rvert}}
\\
& =
  \pi^{-n/4} 2^{\lvert\beta\rvert} (2\beta)!^{1/2} \rho^{n-1} e^{-\rho^2/2}
  \sum_{\gamma\leqslant\beta}
  \frac{1}{\gamma! (2\beta-2\gamma)!}
  \cdot
  \frac{(-1)^{\lvert\gamma\rvert} \rho^{2\lvert\beta-\gamma\rvert} M(n,1,2\beta-2\gamma)}{  2^{2\lvert\gamma\rvert}} 
\end{align*}
for any $\beta\in(\mathbb{N}\cup\{0\})^n$. 
This completes the proof. 
\end{proof}
%
%
%%%%%%
%%%%%% bibliography
%%%%%%

%%%%%%
%%%%%% End
%%%%%%
\end{document}